\newtheorem{theorem}{Theorem}[section]
\newtheorem{lemma}{Lemma}[section]
\newtheorem{proposition}[theorem]{Proposition}
\newtheorem{definite}{Definition}[section]
\newtheorem{remark}{Remark}[section]
\newtheorem{example}{Example}[section]
\newtheorem{assumption}{Assumption}[section]
\renewcommand{\section}{
         \setcounter{equation}{0}
         \@startsection {section}{1}{\z@}{-3.5ex plus -1ex minus
         -.2ex}{2.3ex plus .2ex}{\normalsize\bf}
}
\renewcommand{\subsection}{
         \@startsection {subsection}{1}{\z@}{-3.5ex plus -1ex minus
         -.2ex}{2.3ex plus .2ex}{\normalsize\bf}
}
\def\reals{{\rm\vrule depth0ex width.4pt\kern-.08em R}}
\def\bbbz{{\mathchoice {\hbox{$\sf\textstyle Z\kern-0.4em Z$}}
{\hbox{$\sf\textstyle Z\kern-0.4em Z$}}
{\hbox{$\sf\scriptstyle Z\kern-0.3em Z$}}
{\hbox{$\sf\scriptscriptstyle Z\kern-0.2em Z$}}}}
\newcommand{\nc}{\newcommand}
\nc{\W}{{\bf W}}
\nc{\A}{{\bf A}}
\nc{\bL}{{\bf L}}
\nc{\bH}{{\bf H}}
\nc{\C}{{\cal C}}
\def\eq#1{(\ref{e:#1})}
\def\elabel#1{\label{e:#1}}
\begin{document}
\begin{center}
\Large\bf Mean-variance hedging based on an incomplete market with
external risk factors of non-Gaussian OU processes\footnote{Partial
results and graphs are briefly summarized and reported in 2012
Spring World Congress of Engineering and Technology. This enhanced
version with extension and complete proofs of results is a journal
version of the short conference report.}
\end{center}
\begin{center}
\large\bf Wanyang Dai\footnote{Supported by National Natural Science
Foundation of China with Grant No. 10971249 and Grant No. 11371010.}
\end{center}
\begin{center}
\small Department of Mathematics and State Key Laboratory of Novel
Software Technology\\
Nanjing University, Nanjing 210093, China\\
Email: nan5lu8@netra.nju.edu.cn\\
Submitted: 29 September 2014\\
Revised: 11 March 2015
\end{center}

\vskip 0.1 in
\begin{abstract}

In this paper, we prove the global risk optimality of the hedging
strategy of contingent claim, which is explicitly (or called
semi-explicitly) constructed for an incomplete financial market with
external risk factors of non-Gaussian Ornstein-Uhlenbeck (NGOU)
processes. Analytical and numerical examples are both presented to
illustrate the effectiveness of our optimal strategy. Our study
establishes the connection between our financial system and existing
general semimartingale based discussions by justifying required
conditions. More precisely, there are three steps involved. First,
we firmly prove the no-arbitrage condition to be true for our
financial market, which is used as an assumption in existing
discussions. In doing so, we explicitly construct the
square-integrable density process of the variance-optimal martingale
measure (VOMM). Second, we derive a backward stochastic differential
equation (BSDE) with jumps for the mean-value process of a given
contingent claim. The unique existence of adapted strong solution to
the BSDE is proved under suitable terminal conditions including both
European call and put options as special cases. Third, by combining
the solution of the BSDE and the VOMM, we reach the justification of
the global risk optimality for our
hedging strategy.\\

\noindent{\bf Key words:} Mean-variance hedging, Global risk
minimization, Non-Gaussian Ornstein-Uhlenbeck process, Generalized
Black-Scholes model, Variance-optimal martingale measure, Backward
stochastic differential equation with jumps, Integral-partial
differential equation
\end{abstract}

\section{Introduction}

In this paper, we justify the global risk optimality of the hedging
strategy of contingent claim, which is explicitly constructed for an
incomplete market defined on some filtered probability space
$(\Omega,{\cal F},\{{\cal F}_{t}\}_{t\geq 0},P)$. The financial
market has $d+1$ primitive assets: one bond with constant interest
rate and $d$ risky assets. The price processes of the assets are
described by a generalized Black-Scholes model with coefficients
driven by the market regime caused by leverage effect, etc. The
financial market model includes the Barndorff-Nielsen $\&$ Shephard
(BNS) volatility model proposed by Barndorff-Nielsen and
Shephard~\cite{barnie:nonorn} and further studied in Benth {\em et
al.}~\cite{benkar:merpor}, Benth and
Meyer-Brandis~\cite{benmey:denpro}, Lindberg \cite{lin:newgen}, etc.
as a particular case. Our model is closely related to the one
considered in Delong and Kl$\ddot{\mbox{u}}$ppelberg
\cite{delklu:optinv}. As pointed out in Barndorff-Nielsen and
Shephard~\cite{barnie:nonorn}, these models fit real market data
quite well. Nevertheless, such models also induce incompleteness of
the financial markets, which means that it is impossible to
replicate perfectly contingent claims based on the bond and the $d$
primitive risky assets. A rule for designing a good hedging strategy
is to minimize the mean squared hedging error over the set
$\bar{\Theta}$ of all reasonable
trading strategy processes, 
\begin{eqnarray}
\inf_{u\in\bar{\Theta}}E\left[(v+(u\cdot D)(T)-H)^{2}\right],
\elabel{mvhedgep}
\end{eqnarray}
where $H$ is a random variable representing the discounted payoff of
the claim, $D$ is the discounted price process of $d$ risky assets,
$v$ is the initial endowment and $T$ is the time horizon.
Mathematically speaking, one seeks to compute the orthogonal
projection of $H-v$ on the space $\bar{\Theta}$ of stochastic
integrals.

To solve the mean-variance hedging problem \eq{mvhedgep}, we
explicitly construct a trading strategy for the financial market and
justify it to be the global risk-minimizing hedging strategy by
using the following procedure.

First, we explicitly construct the square-integrable density process
of a variance-optimal martingale measure (VOMM) $Q^{*}$. As a
result, the set of equivalent (local) martingale measures with
square-integrable densities, i.e.,
\begin{eqnarray}
&&{\cal U}_{2}^{e}(D)\equiv\left\{Q\sim P:\frac{dQ}{dP}\in L^{2}(P),
D\;\mbox{is a}\;Q\mbox{-local martingale}\right\}
\elabel{equivmeasure}
\end{eqnarray}
is nonempty. Hence, our market is arbitrage-free (e.g, Delbaen and
Schachermayer~\cite{delsch:genver}). Second, we derive an BSDE with
jumps and external random factors of non-Gaussian Ornstein-Uhlenbeck
(NGOU) type for the mean value process of the option $H$ (i.e.,
$E_{Q^{*}}[H|{\cal F}_{t}]$). The unique existence of adapted
solution to the BSDE is proved under suitable terminal conditions
including both European call and put options as special cases.
Third, by combining the solution to the BSDE and the VOMM, we get
the optimal hedging strategy for our market.

The BSDE and VOMM based procedure is a mixed method of two typical
approaches in solving mean-variance hedging problem: martingale
approach stemmed from Harrison and Kreps~\cite{harkre:mararb}, and
stochastic control approach that views the problem as a
linear-quadratic control problem and employs BSDEs to describe the
solution (see, e.g., Yong and Zhou). This procedure is structured
for a general semimartingale in C\u{e}rn\'y and
Kallsen~\cite{cerkal:strgen} and explicitly (or semi-explicitly)
presented for the current market in Dai~\cite{dai:opthed}. Some
related and independent study can also be found in Jeanblanc {\em et
al.}~\cite{jeaman:meavar}. More precisely, we have the following
literature review and technical comparisons.

A closely related (local) risk minimizing problem was initially
introduced by F\"{o}llmer and Sondermann~\cite{folson:hednon} under
complete information, who also suggested an approach for the
computation of a minimizing strategy in an incomplete market by
extending the martingale approach of Harrison and
Kreps~\cite{harkre:mararb}. The basic idea of the approach was to
introduce a measure of riskiness in terms of a conditional mean
square error process where the discounted price process is a
square-integrable martingale. Furthermore, the answer to the hedging
problem is provided by the {\em Galtchouk-Kunita-Watanabe
decomposition} of the claim. Then, this concept of local-risk
minimization was further extended for the semimartingale case by
F\"{o}llmer and Schweizer~\cite{folsch:hedcon}, and
Schweizer~\cite{sch:opthed,sch:meavar}, where the minimal martingale
measure and F\"{o}llmer-Schweizer (F-S) decomposition play a central
role. Interested readers are referred to F\"{o}llmer and
Schweizer~\cite{folsch:minmar}, Schweizer~\cite{sch:guitou} for more
recent surveys about (local) risk minimization and mean-variance
hedging.

Owing to the fact that one cares about the total hedging error and
not the daily profit-loss ratios, the solution with respect to
global-risk minimization of the unconditional expected squared
hedging error presented in \eq{mvhedgep} was considered (e.g.,
surveys in Pham~\cite{pha:quahed} and Schweizer \cite{sch:guitou}).
Then, the study on global-risk minimization was further developed by
C\u{e}rn\'y and Kallsen~\cite{cerkal:strgen}, who showed that the
hedging model \eq{mvhedgep} admits a solution in a very general
class of arbitrage-free semimartingale markets where local-risk
minimization may fail to be well defined. The key point of their
approach is the introduction of the opportunity-neutral measure
$P^{*}$ that turns the dynamic asset allocation problem into a
myopic one. Furthermore, the minimal martingale measure relative to
$P^{*}$ coincides with the variance-optimal martingale measure
relative to the original probability measure $P$. Recently, to
overcome the difficulties appeared in C\u{e}rn\'y and
Kallsen~\cite{cerkal:strgen} (i.e., a process $N$ appeared in
Definition 3.12 is very hard to find and the VOMM $Q^{*}$ in
Proposition 3.13 is notoriously difficult to determine), the authors
in Jeanblanc {\em et al.}~\cite{jeaman:meavar} developed a method
via stochastic control and backward stochastic differential
equations (BSDEs) to handle the mean-variance hedging problem for
general semimartingales. Furthermore, the authors in Kallsen and
Vierthauer~\cite{kalvie:quahed} derived semi-explicit formulas for
the optimal hedging strategy and the minimal hedging error by
applying general structural results and Laplace transform
techniques. In addition to these works, some related studies in both
general theory and concrete results in specific setups for the
mean-variance hedging problem can be found in, such as,
Arai~\cite{ara:extmea}, Chan {\em et al.}~\cite{chakol:varopt},
Duffie and Richardson~\cite{dufric:meahed}, Gourieroux {\em et
al.}~\cite{goulau:meavar}, Heath {\em et al.}~\cite{heapla:comqua},
Laurent and Pham~\cite{laupha:dynpro}, and references therein.

Comparing with the above studies, our contribution of the current
research is threefold. First, we firmly prove the no-arbitrage
condition to be true for our financial market, i.e., the set defined
in \eq{equivmeasure} is nonempty. This condition is used as an
assumption for the existence of the VOMM in existing discussions
(e.g., Arai~\cite{ara:extmea}, C\u{e}rn\'y and
Kallsen~\cite{cerkal:strgen}, Chan {\em et
al.}~\cite{chakol:varopt}, Jeanblanc {\em et
al.}~\cite{jeaman:meavar}, Kallsen and
Vierthauer~\cite{kalvie:quahed}). In doing so, we explicitly (or
called semi-explicitly) construct a measure through identifying its
explicit density by the general structure presented in C\u{e}rn\'y
and Kallsen~\cite{cerkal:strgen}. Then, we justify it to be the VOMM
for our market model by proving the equivalent conditions given in
C\u{e}rn\'y and Kallsen~\cite{cerkal:meavar}. Second, in applying
our VOMM to obtain the optimal hedging strategy, we derive an BSDE
with jumps for the mean value process of the option $H$. Here, we
lift the requirements that the contingent claims are bounded (e.g.,
Heath and Schweizer~\cite{heasch:marver}, C\u{e}rn\'{y} and
Kallsen~\cite{cerkal:meavar}) or satisfy Lipschitz condition (e.g.,
Roch~\cite{roc:vissol}, Chan {\em et al.}~\cite{chakol:varopt}) to
guarantee the corresponding integral-partial differential equation
(IPDE) to have a classic or viscosity solution. Furthermore, the
unique existence of an adapted solution to our derived BSDE is
firmly proved under certain conditions while in the recent study of
Jeanblanc {\em et al.}~\cite{jeaman:meavar}, such existence of an
adapted solution to their constructed BSDE is only showed as an
equivalent condition to guarantee the existence of an optimal
strategy. More importantly, our BSDE can be solved by developing
related numerical algorithms through the given terminal option $H$
(see, e.g., Dai~\cite{dai:nummet}). Third, from the purpose of easy
applications, our discussion is based on a multivariate financial
market model, which is in contrast to existing studies (e.g.,
C\u{e}rn\'y and Kallsen~\cite{cerkal:strgen}, Chan {\em et
al.}~\cite{chakol:varopt}, Jeanblanc {\em et
al.}~\cite{jeaman:meavar}, Kallsen and
Vierthauer~\cite{kalvie:quahed}). Therefore, unlike the studies in
Hubalek {\em et al.}~\cite{hubkal:varopt} and Kallsen and
Vierthauer~\cite{kalvie:quahed}, our option $H$ is generally related
to a multivariate terminal function and hence a BSDE involved
approach is employed. Actually, whether one can extend the Laplace
transform related method developed in Hubalek {\em et
al.}~\cite{hubkal:varopt} and Kallsen and
Vierthauer~\cite{kalvie:quahed} for single-variate terminal function
to our general multivariate case is still an open problem.

Note that our study in this paper establishes the connection between
our financial system and existing general semimartingale based study
in C\u{e}rn\'y and Kallsen~\cite{cerkal:strgen} since we can
overcome the difficulties in C\u{e}rn\'y and
Kallsen~\cite{cerkal:strgen} by explicitly constructing the process
$N$ and the VOMM $Q^{*}$ as mentioned earlier. Furthermore, our
objective and discussion in this paper are different from the recent
study of Jeanblanc {\em et al.}~\cite{jeaman:meavar} since the
authors in Jeanblanc {\em et al.}~\cite{jeaman:meavar} did not aim
to derive any concrete expression. Nevertheless, interested readers
may make an attempt to extend the study in Jeanblanc {\em et
al.}~\cite{jeaman:meavar} and apply it to our financial market model
to construct the corresponding explicit results.

Finally, when the random variable $H$ in \eq{mvhedgep} is taken to
be a constant (e.g., a prescribed daily expected return), the
associated hedging problem reduces to a mean-variance portfolio
selection problem as studied in Dai~\cite{dai:meavar} by an
alternative feedback control method. In this case, the optimal
policies can be explicitly obtained by both the feedback control
method in Dai~\cite{dai:meavar} and the martingale method presented
in the current paper. In the late method, the related BSDE is a
degenerate one. From this constant option case, we can construct two
insightful examples to provide the effective comparisons between the
two methods. More precisely, our newly constructed hedging strategy
can slightly outperform the feedback control based policy. However,
the performance between the two methods is consistent in certain
sense.

The remainder of the paper is organized as follows. We formulate our
financial market model in Section~\ref{SRBM} and present our main
theorem Section~\ref{optimalhedging}. Analytical and numerical
examples are given in Section~\ref{necom}. Our main theorem is
proven in Section~\ref{hedgee}. Finally, in Section~\ref{concl}, we
conclude this paper with remarks.

\section{The Financial Market}\label{SRBM}

\subsection{The Model}

We use $(\Omega,{\cal F},P)$ to denote a fixed complete probability
space on which are defined a standard $d$-dimensional Brownian
motion $W\equiv\{W(t),t\in[0,T]\}$ with
$W(t)=(W_{1}(t),...,W_{d}(t))'$ and $h$-dimensional subordinator
$L\equiv\{L(t),t\in[0,T]\}$ with
$L(t)\equiv(L_{1}(t),...,L_{h}(t))'$ and c\`adl\`ag sample paths for
some fixed $T\in[0,\infty)$ (e.g., Applebaum~\cite{app:levpro},
Bertoin~\cite{ber:levpro}, and Sato~\cite{sat:levpro} for more
details about subordinators and L\'evy processes). The prime denotes
the corresponding transpose of a matrix or a vector. Furthermore,
$W$, $L$, and their components are assumed to be independent of each
other. For each given $\lambda=(\lambda_{1},...\lambda_{h})'>0$, we
let $L(\lambda s)=(L_{1}(\lambda_{1}s),...,L_{h}(\lambda_{h}s))'$.
Then, we suppose that there is a filtration $\{{\cal F}_{t}\}_{t\geq
0}$ related to the probability space, where ${\cal
F}_{t}\equiv\sigma\{W(s),L(\lambda s): 0\leq s\leq t\}$ for each
$t\in[0,T]$.

The financial market under consideration is a multivariate
L\'evy-driven OU type stochastic volatility model, which consists of
$d+1$ assets. One of the $d+1$ assets is risk-free, whose price
$S_{0}(t)$ is subject to the ordinary differential equation (ODE)
with constant interest rate $r\geq 0$,
\begin{eqnarray}
       dS_{0}(t)=rS_{0}(t)dt,\;\;S_{0}(0)=s_{0}>0.
\elabel{bankaset}
\end{eqnarray}
The other $d$ assets are stocks whose vector price process
$S(t)=(S_{1}(t),...,S_{d}(t))'$ satisfies the following stochastic
differential equation (SDE) for each $t\in[0,T]$,
\begin{eqnarray}
\left\{\begin{array}{ll}
       dS(t)=\mbox{diag}(S(t^{-}))\{b(Y(t^{-}))dt+\sigma(Y(t^{-}))dW(t)\},\\
       S(0)=s>0.
       \end{array}
\right. \elabel{stockassetm}
\end{eqnarray}
Here and in the sequel, the diag($v$) denotes the $d\times d$
diagonal matrix whose entries in the main diagonal are $v_{i}$ with
$i\in\{1,...,d\}$ for a $d$-dimensional vector
$v=(v_{1},...,v_{d})'$ and all the other entries are zero. $Y(t)$ is
a L\'evy-driven OU type process described by the following SDE,
\begin{eqnarray}
\left\{\begin{array}{ll}
       dY(t)=-\Lambda Y(t^{-})dt+dL(\lambda t),\\
       Y(0)=y_{0},
       \end{array}
\right. \elabel{sdeou}
\end{eqnarray}
where $\Lambda=\mbox{diag}(\lambda)$ and
$y_{0}=(y_{10},...,y_{h0})'$. Now, define
\begin{eqnarray}
b(y)&\equiv&(b_{1}(y),...,b_{d}(y))':R_{c}^{h}
\rightarrow[0,\infty)^{d},
\nonumber\\
\sigma(y)&\equiv&(\sigma_{mn}(y))_{d\times d}:\;\;\;\;\;\;
R_{c}^{h}\rightarrow(0,\infty)^{dd},\nonumber
\end{eqnarray}
where $R_{c}^{h}\equiv(c_{1},\infty)\times...\times(c_{h},\infty)$
with $c_{i}=y_{i0}e^{-\lambda_{i}T}$. Thus, we can impose the
following conditions related to the coefficients in
\eq{stockassetm}-\eq{sdeou}:

{\bf C1.} The functions $b(y)$ and $\sigma(y)$ are continuous in $y$
and satisfy that, for each $y\in R_{c}^{h}$,
\begin{eqnarray}
\|b(y)\|&\leq&
A_{b}+B_{b}\|y\|,\elabel{linearg}\\
\|\sigma(y)\sigma(y)'\|&\leq&
A_{\sigma}+B_{\sigma}\|y\|,\elabel{lineargI}\\
\left\|\left(\sigma(y)\sigma(y)'\right)^{-1}\right\|&\leq&
\frac{1}{b_{\sigma}\|y\|}, \elabel{lineargII}
\end{eqnarray}
where the norm $\|A\|$ takes the largest absolute value of all
components of a vector $A$ or all entries of a matrix $A$, and
$A_{b}\geq 0,A_{\sigma}\geq 0,B_{b}\geq 0,B_{\sigma}\geq 0$,
$b_{\sigma}>0$ are constants.

{\bf C2.} The derivatives $\frac{\partial b(y)}{\partial y_{i}}$ and
$\frac{\partial (\sigma(y)\sigma(y)')^{-1}} {\partial y_{i}}$ for
all $i\in\{1,...,h\}$ are continuous in $y$ and satisfy that, for
each $y\in R_{c}^{h}$,
\begin{eqnarray}
\left\|\frac{\partial b(y)}{\partial y_{i}}\right\|
&\leq&\bar{A}_{b}+\bar{B}_{b}\|y\|,\elabel{derivcon}\\
\left\|\frac{\partial (\sigma(y)\sigma(y)')^{-1}} {\partial
y_{i}}\right\|&\leq&\bar{A}_{\sigma}+\bar{B}_{\sigma}\|y\|,
\elabel{derivconI}
\end{eqnarray}
where $\bar{A}_{b}$, $\bar{A}_{\sigma}$, $\bar{B}_{b}$ and
$\bar{B}_{\sigma}$ are some nonnegative constants.

We now introduce the conditions for each subordinator $L_{i}$ with
$i\in\{1,...,h\}$, which can be represented by (e.g., Theorem 13.4
and Corollary 13.7 in Kallenberg~\cite{kal:foumod})
\begin{eqnarray}
L_{i}(t)=\int_{(0,t]}\int_{z_{i}>0}z_{i}N_{i}(ds,dz_{i}), \;t\geq 0.
\elabel{subordrep}
\end{eqnarray}
Here and in the sequel, $N_{i}((0,t]\times A)\equiv\sum_{0<s\leq t}
I_{A}(L_{i}(s)-L_{i}(s^{-}))$ denotes a Poisson random measure with
deterministic, time-homogeneous intensity measure
$\nu_{i}(dz_{i})ds$. $I_{A}(\cdot)$ is the index function over the
set $A$. $\nu_{i}$ is the L\'{e}vy measure satisfying
\begin{eqnarray}
\int_{z_{i}>0}\left(e^{Cz_{i}}-1\right)\nu_{i}(dz_{i})<\infty
\elabel{expintcon}
\end{eqnarray}
with $C$ taken to be a sufficiently large positive constant to
guarantee all of the related integrals in this paper meaningful.
Note that the condition in \eq{expintcon} is on the integrability of
the tails of the L\'evy measures (readers are referred to
Dai~(\cite{dai:meavar,dai:contru,dai:opthed,dai:heatra,dai:optrat})
for the justification of its reasonability).

\subsection{Admissible Strategies}

First, we use $D(t)=(D_{1}(t),...,D_{d}(t))'$ to denote the
associated $d$-dimensional discounted price process, i.e., for each
$m\in\{1,...,d\}$,
\begin{eqnarray}
D_{m}(t)=\frac{S_{m}(t)}{S_{0}(t)}=e^{-rt}S_{m}(t).
\elabel{discountp}
\end{eqnarray}
Furthermore, we define $L^{2}_{{\cal F}}\left([0,T],R^{d},P\right)$
to be the set of all $R^{d}$-valued measurable stochastic processes
$Z(t)$ adapted to $\{{\cal F}_{t},t\in[0,T]\}$ such that
$E\left[\int_{0}^{T}\|Z(t)\|^{2}dt\right]<\infty$. Thus, it follows
from Lemma~\ref{disprice} that $D(\cdot)$ is a continuous $\{{\cal
F}_{t}\}$-semimartingale. In addition, $D(\cdot)$ is locally in
$L^{2}_{{\cal F}}([0,T],$ $R^{d},P)$, i.e., there is a localizing
sequence of stopping times $\{\sigma_{n}\}$ with $n\in {\cal
N}\equiv\{0,1,2,...\}$ such that, for any $n\in{\cal N}$,
\begin{eqnarray}
&&\sup\{E\left[D^{2}(\tau)\right]:\mbox{all
stopping}\;\;\tau\;\;\mbox{time
satisfying}\;\tau\leq\sigma_{n}\}<\infty. \elabel{localmean}
\end{eqnarray}

Second, let $L(D)$ denote the set of $D$-integrable and predictable
processes in the sense of Definition 6.17 in page 207 of Jacod and
Shiryaev~\cite{jacshi:limthe}. Furthermore, let $u_{i}(t)$ denote
the number of shares invested in stock $i\in\{1,...,d\}$ at time $t$
and define $u(t)\equiv(u_{1}(t),...,u_{d}(t))'$. Then, we have the
following definitions concerning admissible strategies.
\begin{definite}
An $R^{d}$-valued trading strategy $u$ is called simple if it is a
linear combination of strategies $ZI_{(\tau_{1},\tau_{2}]}$ where
$\tau_{1}\leq\tau_{2}$ are stopping times dominated by $\sigma_{n}$
for some $n\in{\cal N}$ and $Z$ is a bounded ${\cal
F}_{\tau_{1}}$-measurable random variable. Furthermore, the set of
all such simple trading strategies is denoted by ${\Theta(D)}$.
\end{definite}
\begin{definite}\label{mvset}
A trading strategy $u\in L(D)$ is called admissible if there is a
sequence $\{u^{n},n\in{\cal N}\}$ of simple strategies such that:
$\left(u^{n}\cdot D\right)(t)\rightarrow (u\cdot D)(t)$ in
probability as $n\rightarrow\infty$ for any $t\in[0,T]$ and
$\left(u^{n}\cdot D\right)(T)\rightarrow (u\cdot D)(T)$ in
$L^{2}(P)$ as $n\rightarrow\infty$. Furthermore, the set of all such
admissible strategies is denoted by $\bar{\Theta}(D)$.
\end{definite}

\section{Main Theorem}\label{optimalhedging}

First, for each $y\in R_{c}^{h}$, define
\begin{eqnarray}
B(y)&\equiv&(b_{1}(y)-r,...,b_{d}(y)-r)',\elabel{rhobs}\\
\rho(y)&\equiv&B(y)'\left[\sigma(y)\sigma(y)'\right]^{-1}B(y),
\elabel{rhobsI}\\
P(t,y)&\equiv&E_{t,y}\left[e^{-\int_{t}^{T}\rho(Y(s))ds} \right]>0,
\elabel{pexactsolution} \\
O(t)&\equiv&P(t,Y(t)),\elabel{opyt}\\
a(t)&\equiv&(\mbox{diag}(D(t)))^{-1}
\left(\sigma(Y(t^{-}))\sigma(Y(t^{-}))'\right)^{-1}
B(t,Y(t^{-})),
\elabel{adjustmentp}\\
\hat{Z}(t)&\equiv&\frac{O(t){\cal E}(-a\cdot
D)(t)}{O_{0}},\;\;O_{0}=O(0).\elabel{densitym}
\end{eqnarray}
Note that the process $a(\cdot)$ presented in \eq{adjustmentp} is
corresponding to the adjustment process defined in Lemma 3.7 of
Cerny and Kallsen~\cite{cerkal:strgen}. Furthermore, the process
$\hat{Z}(\cdot)$ presented in \eq{densitym} is associated with the
density process defined in Proposition 3.13 of Cerny and
Kallsen~\cite{cerkal:strgen}. In addition, here and in the sequel,
${\cal E}(N)=\{{\cal E}(N)(t),t\in[0,T]\}$ denotes the stochastic
exponential for a univariant continuous semimartingale
$N=\{N(t),t\in[0,T]\}$ (e.g., pages 84-85 of
Protter~\cite{pro:stoint}) with
\begin{eqnarray}
{\cal
E}(N)(t)=\exp\left\{N(t)-\frac{1}{2}[N,N](t)\right\}\elabel{exps}
\end{eqnarray}
where $[\cdot,\cdot]$ denotes the quadratic variation process of
$N$.

Second, let $L^{2}_{{\cal F},p}([0,T],R^{d},P)$ denote the set of
all $R^{d}$-valued predictable processes (see, e.g., Definition 5.2
in page 21 of Ikeda and Watanabe~\cite{ikewat:stodif}) and let
$L^{2}_{p}([0,T],$ $R^{h},P)$ be the set of all $R^{h}$-valued
predictable processes $\tilde{Z}(t,z)=$ $(\tilde{Z}_{1}(t,z),$
$...,$ $\tilde{Z}_{h}(t,z))'$ satisfying
\begin{eqnarray}
&&E\left[\sum_{i=1}^{h}\int_{0}^{T}\int_{z_{i}>0}
\left|\tilde{Z}_{i}(t,z)\right|^{2}\nu_{i}(dz_{i})dt\right]<\infty.
\nonumber
\end{eqnarray}
Furthermore, let
\begin{eqnarray}
\bar{Z}(t)&\equiv&\frac{\hat{Z}(t^{-})}{\hat{Z}(t)},\elabel{barbii}\\
\bar{B}_{i}(Y(t^{-}))&\equiv&\sum_{j=1}^{d}\left(\left(B(Y(t^{-}))'
\left(\sigma(Y(t^{-}))\sigma(Y(t^{-}))'\right)^{-1}\right)\right)_{j}
\sigma_{ji}(Y(t^{-})),
\elabel{barbiiI}\\
F(t,z_{i}))&\equiv&\frac{P(t,Y(t^{-})+z_{i}e_{i})
-P(t,Y(t^{-}))}{P(t,Y(t^{-}))}, \elabel{fszy}
\end{eqnarray}
where, $e_{i}$ is the $h$-dimensional unit vector with the $i$th
component one. Then, we define
\begin{eqnarray}
&&g\left(t,V(t^{-}),\bar{V}(t),\tilde{V}(t,\cdot),Y(t^{-})\right)
\elabel{mgvvv}\\
&\equiv&-\sum_{i=1}^{d}\bar{V}_{i}(t)\bar{B}_{i}(Y(t^{-}))\nonumber\\
&&+\sum_{i=1}^{h}\int_{z_{i}>0}\left(\tilde{V}_{i}(t,z_{i})F(t,z_{i})
\bar{Z}(t)+V(t^{-})\left(F(t,z_{i})\bar{Z}(t)\right)^{2}\right)
\lambda_{i}\nu_{i}(dz_{i}).\nonumber
\end{eqnarray}
\begin{definite}
For a given random variable $H$, a 3-tuple $(V,\bar{V},\tilde{V})$
is called a $\{{\cal F}_{t}\}$-adapted strong solution of the BSDE
\begin{eqnarray}
\;\;\;\;\;V(t)
&=&H-\int_{t}^{T}g(s,V(s^{-}),\bar{V}(s),\tilde{V}(s,\cdot),Y(s^{-}))ds
\elabel{xbsde}\\
&&-\int_{t}^{T}\sum_{i=1}^{d}\bar{V}_{i}(s)dW_{i}(s)
-\int_{t}^{T}\sum_{i=1}^{h}\int_{z_{i}>0}
\tilde{V}_{i}(s,z_{i})\tilde{N}_{i}(\lambda_{i}ds,dz_{i})\nonumber
\nonumber
\end{eqnarray}
if $V\in L^{2}_{{\cal F}}([0,T],R,P)$ is a c\`adl\`ag process,
$\bar{V}=(\bar{V}_{1},...,\bar{V}_{d})\in L_{{\cal
F},p}^{2}([0,T],R^{d},P)$,
$\tilde{V}=(\tilde{V}_{1},...,\tilde{V}_{h})\in
L^{2}_{p}([0,T],R^{h},P)$, and \eq{xbsde} holds a.s., where
\begin{eqnarray}
\tilde{N}_{i}(\lambda_{i}dt,dz_{i})\equiv
N_{i}(\lambda_{i}dz_{i},dt)-\lambda_{i}\nu_{i}(dz_{i})dt\;\;\;
\mbox{for each}\;\;i\in\{1,...,h\}. \elabel{nilambda}
\end{eqnarray}
\end{definite}

To impose suitable condition on the option $H$, we use
$L^{\gamma}_{{\cal F}_{T}}(\Omega,R^{d},P)$ for a positive integer
$\gamma$ to denote the set of all $R^{d}$-valued, ${\cal
F}_{T}$-measurable random variables $\xi\in R^{d}$ satisfying
$E\left[\|\xi\|^{\gamma}\right]<\infty$.
\begin{assumption}\label{hstopasump}
$H\in L^{4}_{{\cal F}_{T}}(\Omega,R,P)$ and there exists a sequence
of random variables $H_{\tau_{n}}\in L^{2}_{{\cal
F}_{T\wedge\tau_{n}}}(\Omega,R,P)$ satisfying
$H_{\tau_{n}}\rightarrow H$ in $L^{2}$ as $n\rightarrow\infty$ and
$H_{\tau_{n}}(\omega)=H(\omega)$ for all
$\omega\in\{\omega,\tau_{n}(\omega)\geq T\}$, where $\{\tau_{n}\}$
is a sequence of nondecreasing $\{{\cal F}_{t}\}$-stopping times
satisfying $\tau_{n}\rightarrow\infty$ a.s. as $n\rightarrow\infty$.
\end{assumption}
As pointed out in Dai~\cite{dai:opthed}, under conditions {\bf C1},
{\bf C2}, and \eq{expintcon}, the discounted European call and put
options satisfy Assumption~\ref{hstopasump}. Now, we can state our
main theorem of the paper as follows.
\begin{theorem}\label{opthedge}
Under conditions {\bf C1}, {\bf C2}, \eq{expintcon}, and
Assumption~\ref{hstopasump}, let $(V,\bar{V},\tilde{V})$ be the
unique $\{{\cal F}_{t}\}$-adapted strong solution of the BSDE in
\eq{xbsde}. Then, the optimal hedging strategy
$\phi\in\bar{\Theta}(D)$ for \eq{mvhedgep} is given by
\begin{eqnarray}
\phi(t)=\xi(t)-(v+\Psi(t^{-})-V(t^{-}))a(t),
\elabel{optimalh}
\end{eqnarray}
where, the pure hedge coefficient $\xi$ is given by
\begin{eqnarray}
\xi(t)&=&\left(\tilde{c}^{D^{*}}(t)\right)^{-1}
\left(\tilde{c}^{DV^{*}}(t)\right),\elabel{xicb}\\
\tilde{c}^{D^{*}}(t)&=&\mbox{diag}(D(t))
\left(\sigma(Y(t^{-}))\sigma(Y(t^{-}))'\right)\mbox{diag}(D(t)),
\elabel{xicbI}\\
\tilde{c}^{DV^{*}}(t)&=&
\left(\sum_{i=1}^{d}D_{1}(t)\sigma_{1i}(Y(t^{-}))\bar{V}_{i}(t),...,
\sum_{i=1}^{d}D_{d}(t)\sigma_{di}(Y(t^{-}))\bar{V}_{i}(t)\right)'.
\elabel{xicbII}
\end{eqnarray}
In addition, $\Psi$ is the unique solution of the SDE
\begin{eqnarray}
\Psi(t)=((\xi-(v-V_{-})a)\cdot D)(t) -(\Psi_{-}\cdot(a\cdot D))(t).
\elabel{geqn}
\end{eqnarray}
\end{theorem}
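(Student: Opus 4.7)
The overall plan is to realize \Theorem{opthedge} as a concrete instantiation of the general semimartingale structural result of C\v{e}rn\'y and Kallsen~\cite{cerkal:strgen,cerkal:meavar}, which expresses the optimal mean-variance hedge as the sum of a pure hedge coefficient from a Galtchouk-Kunita-Watanabe decomposition under the opportunity-neutral measure plus a feedback correction $-(v+\Psi_{-}-V_{-})a$ involving the adjustment process and the accumulated gains-to-date. To invoke this machinery in the present Lévy-OU setting two nontrivial ingredients must be supplied: (i) an explicit square-integrable density for the VOMM $Q^{*}$ (which simultaneously yields ${\cal U}_{2}^{e}(D)\neq\emptyset$ and hence no-arbitrage), and (ii) uniqueness of the mean-value process $V(t)=E_{Q^{*}}[H|{\cal F}_{t}]$ as the strong solution of the BSDE~\eq{xbsde}. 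The proof would then proceed in the three steps foreshadowed in the introduction.

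For Step~1, I would verify that $\hat Z$ defined in~\eq{densitym} is a positive square-integrable $P$-martingale with $D$ a $Q^{*}$-local martingale under $dQ^{*}/dP=\hat Z(T)$. Applying It\^o's formula to $P(t,Y(t)){\cal E}(-a\cdot D)(t)$ and using the Feynman-Kac representation of $P(t,y)$ in~\eq{pexactsolution}, the drifts generated by $Y$ and those generated by the stochastic exponential cancel precisely against the $\rho$-term inherited from the differentiation of $P$; this is the computational heart of identifying $\hat Z$ with the VOMM density. Square-integrability on each $[0,T\wedge\sigma_{n}]$ follows from the exponential-tail condition~\eq{expintcon} together with the linear/growth bounds in {\bf C1}--{\bf C2}, and the equivalent characterizations of the VOMM in C\v{e}rn\'y-Kallsen~\cite{cerkal:meavar} complete the identification.

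For Step~2, writing $V(t)=E_{Q^{*}}[H|{\cal F}_{t}]$, the process $\hat Z V$ is a $P$-martingale, so the martingale representation theorem for the filtration generated by $(W,L(\lambda\cdot))$ yields an orthogonal decomposition with Brownian integrands $\bar V=(\bar V_{1},\dots,\bar V_{d})$ and Poisson integrands $\tilde V=(\tilde V_{1},\dots,\tilde V_{h})$. Expanding $\hat Z V$ by It\^o's product rule, substituting the dynamics of $\hat Z$ from~\eq{densitym}-\eq{fszy}, and matching drifts produces exactly the generator $g$ in~\eq{mgvvv}. Because $g$ is \emph{linear} in $(V,\bar V,\tilde V)$, the unique existence of a strong solution under only $H\in L^{4}_{{\cal F}_{T}}$ can be obtained by first solving the BSDE with stopped terminal data $H_{\tau_{n}}$ (which is bounded in $L^{2}$), establishing uniform $L^{2}$-estimates via the exponential-moment bound~\eq{expintcon} and the growth hypotheses in {\bf C1}--{\bf C2}, and then passing to the $L^{2}$-limit as $\tau_{n}\uparrow\infty$ using Assumption~\ref{hstopasump}. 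The main obstacle is precisely this step: guaranteeing the contraction/closability estimates despite the Poisson-integral term $(F\bar Z)^{2}$ in $g$ and the mere $L^{4}$ integrability of $H$, and then showing the limit 3-tuple actually lives in the required spaces $L^{2}_{\cal F}$, $L^{2}_{{\cal F},p}$, and $L^{2}_{p}$.

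Step~3 assembles the pieces. Once $V$ and $Q^{*}$ are in hand, the C\v{e}rn\'y-Kallsen representation of the mean-variance optimal strategy reads $\phi=\xi-(v+\Psi_{-}-V_{-})a$ with the gains $\Psi=\phi\cdot D$ and $\xi$ the pure hedge coefficient from the Galtchouk-Kunita-Watanabe decomposition of $V$ with respect to $D$ under $Q^{*}$. Because the Brownian part is the only continuous local-martingale contribution to $D$ (the jumps of $Y$ enter $D$ only through coefficients evaluated at $t^{-}$, so $[D,D]^{c}$ is the only relevant piece for the pure hedge), computing $d\langle D,D\rangle^{*}$ and $d\langle D,V\rangle^{*}$ gives directly $\xi=(\tilde c^{D^{*}})^{-1}\tilde c^{DV^{*}}$ as in~\eq{xicb}-\eq{xicbII}. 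Substituting this $\phi$ into $\Psi=\phi\cdot D$ yields the integral equation~\eq{geqn}, whose unique solution exists by a standard linear-SDE argument. It remains to verify $\phi\in\bar\Theta(D)$ by exhibiting an approximating sequence of simple strategies via the localizing sequence $\{\sigma_{n}\}$ and standard density arguments, after which the optimality of $\phi$ for~\eq{mvhedgep} follows from the C\v{e}rn\'y-Kallsen characterization.
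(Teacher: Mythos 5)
Your proposal follows essentially the same route as the paper's own proof: explicitly constructing the VOMM density $\hat{Z}=O\,{\cal E}(-a\cdot D)/O_{0}$ and verifying its martingale and square-integrability properties, deriving the BSDE for $V(t)=E_{Q^{*}}[H|{\cal F}_{t}]$ via martingale representation and It\^o's product rule applied to $\hat{Z}V$, solving it by localization with the stopped terminal data $H_{\tau_{n}}$ under the locally Lipschitz (indeed linear) generator, and finally reading off $\xi$ from the covariations $\langle D,D\rangle$ and $\langle D,V\rangle$ under the opportunity-neutral measure before invoking the C\u{e}rn\'y--Kallsen structural theorem. The plan is correct in outline and matches the paper's four-part argument, so no substantive divergence to report.
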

\begin{remark}\label{rrkkm}
The process $V(\cdot)$ appeared in Theorem~\ref{opthedge} is
actually the conditional mean value process,
\begin{eqnarray}
V(t)=E_{Q^{*}}\left[H\left|{\cal
F}_{t}\right]\right.\;\;\mbox{with}\;\;dQ^{*}\equiv\hat{Z}(T)dP.
\elabel{defineV}
\end{eqnarray}
Since it is not easy to be computed directly as the Markovian based
conditional process $O(t,Y(t))$, we turn to use the BSDE in
\eq{xbsde} to evaluate it, which is convenient for us to design the
optimal hedging policy as explained in Introduction of the paper.
\end{remark}

The proof of Theorem~\ref{opthedge} will be provided in
Section~\ref{hedgee}.

\section{Performance Comparisons}\label{necom}

The material in this section is partially reported in the short
conference version of the current paper (see,
Dai~\cite{dai:opthed}). To be convenient and clear for readers, we
refine it here. Note that the interest rate $r$ in \eq{bankaset}
here is taken to be zero. Furthermore, the financial market is
assumed to be self-financing, which implies that $X(t)=v+(u\cdot
D)(t)$. In addition, the terminal option $H$ is taken to be a
constant $p$, i.e., $H=p$. In this case, the optimal policies can be
explicitly obtained by the feedback control method studied
in~Dai~\cite{dai:meavar} and the martingale method presented in the
current paper. In the late method, the related BSDE is a degenerate
one, which can be easily observed from \eq{defineV} in
Remark~\ref{rrkkm}. However, from this constant option $H=p$, we can
construct two insightful examples to provide the effective
comparisons between the two methods.

More precisely, by (18) in Theorem 3.1 of Dai~\cite{dai:meavar}, we
know that the terminal variance under the optimal policy stated in
(15) of Theorem 3.1 of Dai~\cite{dai:meavar} is given by
\begin{eqnarray}
Var(X^{*}(T))=\frac{P(0,y_{0})} {1-P(0,y_{0})}\left(p-v\right)^{2}.
\elabel{onevar}
\end{eqnarray}
In addition, by using Theorem~\ref{opthedge} in the current paper
and Theorem 4.12 in C\u{e}rn\'y and Kallsen~\cite{cerkal:strgen}, we
know that the hedging error under the optimal policy in
\eq{optimalh} is given by
\begin{eqnarray}
Herr=P(0,y_{0})\left(p-v\right)^{2}. \elabel{twovar}
\end{eqnarray}
For the purpose of performance comparisons, we calculate the
differences between the optimal terminal variances in \eq{onevar}
and the optimal hedging errors in \eq{twovar}, i.e.,
\begin{eqnarray}
Error&=&Var(X^{*}(T))-Herr\elabel{errordif}\\
&=&\frac{(P(0,y_{0}))^{2}}
{1-P(0,y_{0})}\left(p-v\right)^{2}\nonumber\\
&>&0.\nonumber
\end{eqnarray}
The result shown in the last inequality of \eq{errordif} is
intuitively right since the optimal strategy in \eq{optimalh} is
taken over a general decision set given in Definition~\ref{mvset}
and the one in (15) of Theorem 3.1 of Dai~\cite{dai:meavar} is taken
in an {\em ad-hoc} approach. Nevertheless, the errors are very small
as displayed in the following numerical examples.
\begin{example}\label{exI}
Here, we suppose that the financial market is given by the
Black-Scholes model
\begin{eqnarray}
dD(t)=D(t)(\alpha dt+\beta dB(t)),
\elabel{blachscholes}
\end{eqnarray}
where $\alpha$ and $\beta$ are given constants. Owing to Definition
2.1.4(b) in pages 273-274 of $\emptyset$ksendal \cite{oks:stodif},
the option $H=p$ (a positive constant) is not attainable and hence
the associated hedging error can not be zero if the initial
endowment $v\neq p$. However, by the simulated results displayed in
Figures~\ref{bsmodel} and~\ref{bsmodelI}, we see that the absolute
error between the optimal variance based on the policy in (15) of
Theorem 3.1 of Dai~\cite{dai:meavar} and the optimal hedging error
based on the strategy in \eq{optimalh} approaches zero as the
terminal time increases. The rate of convergence is heavily
dependent on the volatility $\beta$. If $\beta$ is relatively large,
the difference requires more time to reach zero. Nevertheless, if
the millisecond is employed to represent the time unit in a
supercomputer based trading system, the required time for the
convergence makes sense in practice.
\begin{figure}[tbh]
\centerline{\epsfxsize=4.0in\epsfbox{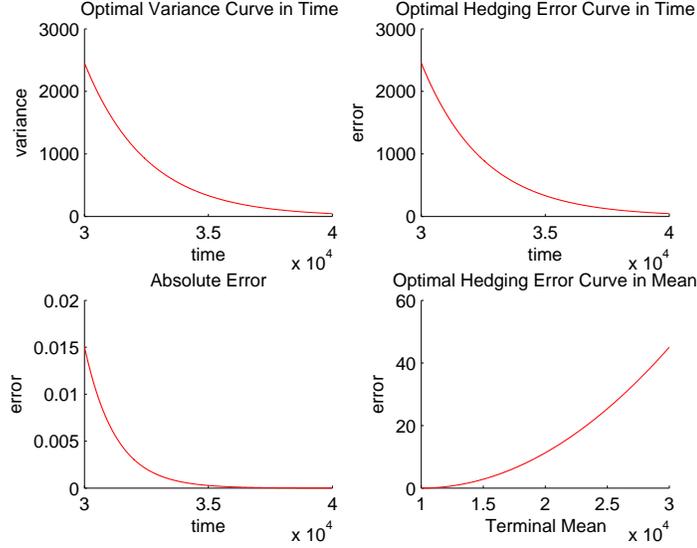}} \caption{\small
Errors using Black-Scholes model with $r=0$, $v=10000$, $p=30000$,
$T=40000$, $\alpha=2$, $\beta=100$.} \label{bsmodel}
\end{figure}
\begin{figure}[tbh]
\centerline{\epsfxsize=4.0in\epsfbox{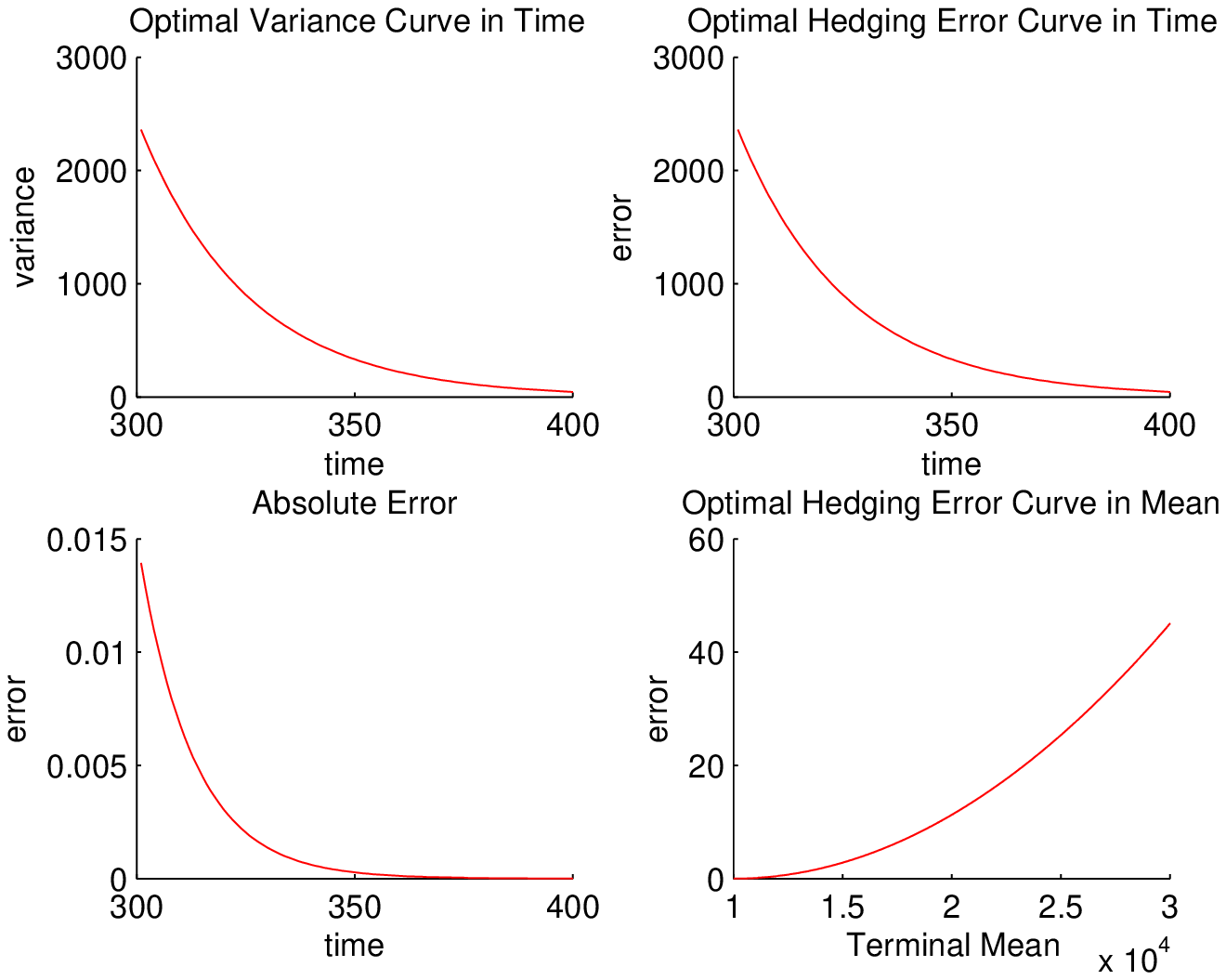}} \caption{\small
Errors using Black-Scholes model with $y_{0}=10$, $r=0$, $v=10000$,
$p=30000$, $T=400$, $\alpha=2$, $\beta=10$.} \label{bsmodelI}
\end{figure}
\end{example}
\begin{example}
Here, we assume that the financial market is presented by the BNS
model
\begin{eqnarray}
dD(t)=D(t)((\alpha+\beta Y(t^{-}))dt+\sqrt{Y(t^{-})}dB(t)),
\elabel{bnsm}
\end{eqnarray}
where $\alpha$ and $\beta$ are given constants. Furthermore, owing
to the remarks to the condition in \eq{expintcon} and owing to the
discussions in Dai~\cite{dai:contru}, we suppose that the driving
subordinator $L(\lambda\cdot)$ with $\lambda=1$ to the SDE in
\eq{sdeou} is a compound Poisson process. The interarrival times of
the process are exponentially distributed with mean $1/\mu$ and the
jump sizes of the process are also exponentially distributed with
mean $1/\mu_{1}$. By the simulated results displayed in
Figure~\ref{bnsmodel}, we see that the similar illustration
displayed in Example~\ref{exI} also makes sense for the current
example, where $\delta$ appeared in Figure~\ref{bnsmodel} is the
length of equally divided subintervals of $[0,T]$. In addition, by
the simulated results, we also see that, by perfect hedging is
impossible in an incomplete market, the mean-variance hedging errors
can be very small in many cases when terminal time increases.
\begin{figure}[tbh]
\centerline{\epsfxsize=4.0in\epsfbox{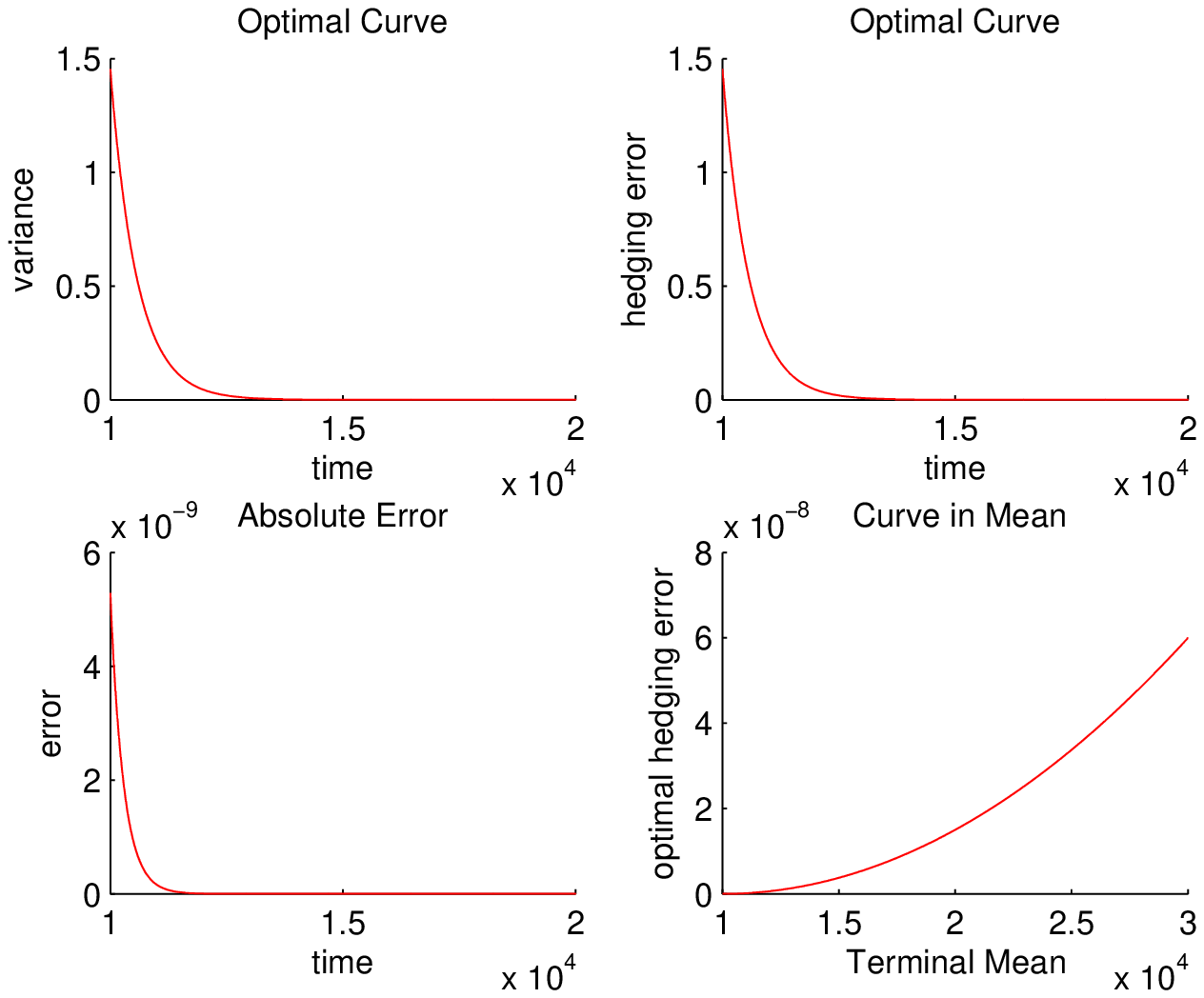}} \caption{\small
Errors using BNS model with $y_{0}=10$, $r=0$, $v=10000$, $p=30000$,
$T=200$, $\delta=0.01$, $\alpha=0.5$, $\beta=0.02$, $\mu=10$,
$\mu_{1}=8$.} \label{bnsmodel}
\end{figure}
\end{example}

\section{Proof of Theorem~\ref{opthedge}}\label{hedgee}

The proof consists of four parts presented in the subsequent four
subsections: the justification of a proposition related to the
discounted price process, the demonstration of a proposition related
to the VOMM, the illustration of unique existence of solution to a
type of BSDEs with jumps, and the remaining proof of
Theorem~\ref{opthedge}.

\subsection{The Proposition Related to the Discounted Price Process}\label{mtwolemma}

\begin{proposition}\label{disprice}
Under conditions {\bf C1}, {\bf C2}, and \eq{expintcon}, we have
that $D(\cdot)$ is a continuous $\{{\cal F}_{t}\}$-semimartingale,
i.e.,
\begin{eqnarray}
D(\cdot)=D_{0}+M^{D}(\cdot)+B^{D}(\cdot), \elabel{dsde}
\end{eqnarray}
where $M^{D}(\cdot)$ and $B^{D}(\cdot)$ are an $\{{\cal
F}_{t}\}$-martingale and a predictable process of finite variation
respectively. Furthermore, $D(\cdot)$ is locally in $L^{2}_{{\cal
F}}([0,T],$ $R^{d},P)$ in the sense as stated in \eq{localmean}.
\end{proposition}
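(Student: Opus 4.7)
The approach is to apply Itô's formula to $D_m(t) = e^{-rt} S_m(t)$ using \eq{stockassetm}, read off the semimartingale decomposition directly, and then construct a straightforward localizing sequence to verify the local $L^2$ property \eq{localmean}. The only real preliminary is to check that $Y(\cdot)$ takes values in $R_c^h$, since C1--C2 are stated only on that domain. Solving \eq{sdeou} explicitly gives $Y_i(t) = e^{-\lambda_i t} y_{i0} + \int_0^t e^{-\lambda_i(t-s)} dL_i(\lambda_i s) \geq e^{-\lambda_i t} y_{i0}$, which strictly exceeds $c_i = e^{-\lambda_i T} y_{i0}$ on $[0,T)$, because $L$ is a nonnegative subordinator. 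Hence $b(Y(\cdot))$ and $\sigma(Y(\cdot))$ are almost surely locally bounded c\`adl\`ag processes on $[0,T]$.

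With $Y$ so controlled, \eq{stockassetm} is a pathwise linear (geometric) SDE in $S$ with locally bounded coefficients, and admits the explicit solution
\[
S_m(t) = s_m \exp\left\{\int_0^t \left(b_m(Y(s^-)) - \frac{1}{2}(\sigma(Y(s^-))\sigma(Y(s^-))')_{mm}\right) ds + \int_0^t \sum_{j=1}^d \sigma_{mj}(Y(s^-)) dW_j(s)\right\},
\]
which is continuous and strictly positive almost surely. Applying Itô's product rule to $e^{-rt} S_m(t)$ then yields
\[
dD(t) = \mbox{diag}(D(t))\left\{B(Y(t^-))\,dt + \sigma(Y(t^-))\,dW(t)\right\},
\]
with $B(y)$ as in \eq{rhobs}. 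This is precisely the decomposition \eq{dsde} with
\[
M^D(t) = \int_0^t \mbox{diag}(D(s))\,\sigma(Y(s^-))\,dW(s), \qquad B^D(t) = \int_0^t \mbox{diag}(D(s))\,B(Y(s^-))\,ds,
\]
the first a continuous local martingale (standard stochastic integration against the Brownian motion $W$) and the second a continuous predictable process of finite variation. Continuity of $D$ itself is immediate: the jumps of $L$ (and hence of $Y$) enter only through the integrands $b(Y(s^-))$ and $\sigma(Y(s^-))$, never as a differential noise term, so they are not transmitted to $S$ or $D$.

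For the local $L^2$ assertion \eq{localmean}, I would define
\[
\sigma_n = \inf\{t \in [0,T] : \|D(t)\| \geq n\} \wedge T.
\]
Since $D$ has continuous paths that are a.s.\ finite on the compact interval $[0,T]$, $\sigma_n = T$ eventually on a set of full probability, so $\{\sigma_n\}$ is a valid localizing sequence. For any stopping time $\tau \leq \sigma_n$, continuity forces $\|D(\tau)\| \leq n$, whence $E[\|D(\tau)\|^2] \leq n^2 < \infty$, which is exactly \eq{localmean}.

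The main (and fairly mild) obstacle is the domain check of the first paragraph --- ensuring the coefficients of \eq{stockassetm} are genuinely well-defined along $Y(\cdot)$. Once the explicit OU representation pins $Y$ strictly above the lower boundary of $R_c^h$ on $[0,T)$, the rest is routine stochastic calculus. I note that the exponential integrability condition \eq{expintcon} plays no role in this proposition; it enters only in later arguments, in particular when obtaining the higher moment estimates needed to construct the VOMM and, if desired, to upgrade $M^D$ from a continuous local martingale to a true martingale on $[0,T]$.
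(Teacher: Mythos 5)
Your construction of the decomposition itself follows the same route as the paper (explicit exponential solution for $S_m$, It\^o's product rule for $e^{-rt}S_m(t)$, a hitting-time localizing sequence for \eq{localmean}), but there is a genuine gap: the proposition asserts that $M^{D}$ is an $\{{\cal F}_{t}\}$-\emph{martingale}, not merely a local martingale. You stop at ``continuous local martingale'' and explicitly relegate the upgrade to a true martingale to later arguments, calling it optional. It is not optional here --- it is part of the statement being proved. Consequently your closing remark that \eq{expintcon} ``plays no role in this proposition'' is incorrect: it is exactly what makes the martingale claim provable. Under {\bf C1} the coefficients $b$ and $\sigma\sigma'$ grow linearly in $\|y\|$, so the exponent in your explicit formula for $S_m(t)$ contains $\int_0^t b_m(Y(s^-))\,ds$, which is of order $\|L(\lambda T)\|$; hence even the second moment of $D_m(t)$ requires exponential moments of the subordinator, i.e.\ \eq{expintcon}. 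The paper's proof establishes
\begin{eqnarray}
E\left[\int_{0}^{T}\Bigl(D_{m}(s)\sum_{n=1}^{d}\sigma_{mn}(Y(s^{-}))\Bigr)^{2}ds\right]<\infty
\nonumber
\end{eqnarray}
by combining the exponential representation of $S_m$, the pathwise bound $\lambda_i\int_t^{\hat t}Y_i(s)\,ds\le y_i+L_i(\lambda_i \hat t)-L_i(\lambda_i t)$, and the finiteness of $E[e^{C\|L(\lambda T)\|}]$ guaranteed by \eq{expintcon}; it then invokes Theorem 4.40(b) of Jacod and Shiryaev to conclude that $M^D$ is a (square-integrable) true martingale. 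Without some such moment estimate your argument proves a weaker statement than the proposition. The remainder of your proposal --- the domain check for $Y$, the continuity of $D$, and the localization argument for \eq{localmean} --- is fine and matches the paper.
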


We divide the proof of the proposition into two parts. First, we
have the following lemma.
\begin{lemma}\label{sprice}
Under \eq{expintcon}, the unique adapted solution to the SDE in
\eq{sdeou} for each $\hat{t}>t$, $i\in\{1,...,h\}$, and
$y\in(0,\infty)^{h}$ is given by
\begin{eqnarray}
&&Y_{i}(\hat{t})=y_{i}e^{-\lambda_{i}(\hat{t}-t)}
+\int_{t}^{\hat{t}}e^{-\lambda_{i}(s-t)} dL_{i}(\lambda_{i}s)\geq
y_{i}e^{-\lambda_{i}\hat{t}},\;\;\;\;\;Y_{i}(t)=y_{i}.
\elabel{uniqso}
\end{eqnarray}
Furthermore, under conditions {\bf C1}, {\bf C2}, and
\eq{expintcon}, there is a unique solution $(S_{0}(t),S(t)')$ for
\eq{stockassetm}-\eq{sdeou}, which is an $\{{\cal F}_{t}\}$-adapted
and continuous semimartingale with
\begin{eqnarray}
&&S(\cdot)\in L^{2}_{{\cal
F}}\left([0,T],R^{d},P\right).\elabel{sqareint}
\end{eqnarray}
In addition, for each $m\in\{1,...,d\}$,
\begin{eqnarray}
S_{m}(t)&=&S_{m}(0)\exp\left\{\int_{0}^{t}\left[b_{m}(Y(s^{-}))-
\frac{1}{2}\sum_{n=1}^{d}\sigma_{mn}^{2}(Y(s^{-}))\right]ds\right.
\elabel{exsol}\\
&&\left.+\int_{0}^{t}\sum_{n=1}^{d}\sigma_{mn}(Y(s^{-}))dW_{n}(s)\right\}.
\nonumber
\end{eqnarray}
\end{lemma}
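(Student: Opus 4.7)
The plan is to handle the two SDEs sequentially, exploiting that \eq{sdeou} is linear and decoupled from $S$, and that once $Y$ is known, \eq{stockassetm} is a geometric equation with random but adapted coefficients.

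For the first part, I would apply the integration-by-parts (product) formula to $e^{\Lambda t}Y(t)$. Since $\Lambda$ is deterministic and $L(\lambda\cdot)$ is a pure-jump subordinator of finite variation on compacts, the covariation term vanishes and one obtains $d(e^{\Lambda t}Y(t)) = e^{\Lambda t}dL(\lambda t)$; integrating from $t$ to $\hat t$ yields formula \eq{uniqso} componentwise. Uniqueness is standard: the difference of two solutions satisfies the deterministic linear ODE $\dot Z = -\Lambda Z$ with $Z(t)=0$, forcing $Z\equiv 0$. The lower bound $Y_i(\hat t)\geq y_i e^{-\lambda_i \hat t}$ is immediate because $L_i$ is nondecreasing, so the integral term is nonnegative; in particular $Y(s) \in R_c^h$ for every $s\in[0,T]$, which is exactly what is needed to apply C1--C2 pointwise along the trajectory.

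For the second part, $S_0$ is trivially $s_0 e^{rt}$. With $Y$ in hand as an $\{\mathcal{F}_t\}$-adapted càdlàg process taking values in $R_c^h$, the coefficients $b(Y(t^-))$ and $\sigma(Y(t^-))$ are well-defined, left-continuous, locally bounded adapted processes by the continuity in C1. Hence \eq{stockassetm} is a standard linear SDE driven by $W$ with random coefficients; applying Itô's formula to $\log S_m(t)$ (which is legitimate once strict positivity is propagated) converts the SDE into the explicit form \eq{exsol}, and conversely Itô applied to \eq{exsol} verifies that it solves \eq{stockassetm}. Pathwise uniqueness and strong existence follow from the classical argument for linear SDEs with locally bounded adapted coefficients (e.g.\ localize by stopping times where $\|Y\|$ is bounded and invoke the Lipschitz-in-$S$ structure of the coefficients).

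The genuine difficulty is showing $S\in L^2_{\mathcal{F}}([0,T],R^d,P)$. My strategy is a three-step moment bootstrap. Step~(i): from \eq{uniqso} and the independence-and-stationarity of the subordinator's increments, compute the Laplace transform via the Lévy--Khintchine formula to obtain
\begin{equation*}
E\!\left[\exp\bigl(C\,Y_i(t)\bigr)\right]
= \exp\!\left\{C y_i e^{-\lambda_i t} + \lambda_i\!\int_0^t\!\!\int_{z_i>0}\bigl(e^{C e^{-\lambda_i(t-s)}z_i}-1\bigr)\nu_i(dz_i)\,ds\right\},
\end{equation*}
which is finite by \eq{expintcon} for $C$ large enough. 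Step~(ii): by Jensen/Fubini, $E[\exp(C\int_0^T \|Y(s)\|ds)]<\infty$, so all polynomial moments of $\int_0^T\|Y(s)\|ds$ are finite. Step~(iii): rewrite $S_m^p$ from \eq{exsol} as a Doléans exponential $\mathcal{E}(p\,\sigma_m(Y_-)\!\cdot\!W)$ times a finite-variation correction whose integrand is bounded by $C(1+\|Y(s^-)\|)$ thanks to the linear-growth bounds in C1. For $p=2+\varepsilon$, Hölder splits $E[S_m^2(t)]$ into the expectation of a genuine martingale (after a Novikov verification using the exponential moments of $\int_0^T\|Y\|ds$) times $E[\exp(K\int_0^T \|Y(s)\|ds)]^{1/q}$, which is finite. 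This yields $\sup_{t\le T}E[S_m^2(t)]<\infty$, hence \eq{sqareint}. The main obstacle, as expected, is verifying Novikov / controlling the exponential with coefficients of linear growth in an unbounded process; this is precisely what the exponential integrability condition \eq{expintcon} was introduced to deliver.
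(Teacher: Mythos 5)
Your proposal is correct, and for the two explicit formulas it coincides with the paper's route: \eq{uniqso} is obtained there by citing Applebaum (pp.\ 316--317), whose derivation is exactly your integration by parts on $e^{\Lambda t}Y(t)$, and \eq{exsol} is obtained by applying It\^o's formula to the exponential of $X_m(t)=\int_0^t\alpha_m\,ds+\int_0^t\beta_m'\,dW$, the mirror image of your $\log S_m$ computation. The one place where you genuinely diverge is the existence/uniqueness and the $L^2$ bound \eq{sqareint}: the paper simply delegates these to Lemma 4.1 of Dai (2011) and then separately verifies that the martingale part of $S_m$ is square-integrable via the estimate \eq{dmeight}, whereas you give a self-contained moment bootstrap. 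Your bootstrap rests on the same two ingredients the paper uses throughout (there in \eq{disequine}, \eq{basecon} and later in \eq{caletau}): the pathwise bound $\lambda_i\int_t^{\hat t}Y_i(s)\,ds\le y_i+L_i(\lambda_i(\hat t-t))$, which is in fact a more direct route to your Step (ii) than Jensen--Fubini, and the exponential integrability \eq{expintcon} of the L\'evy measures. Two small remarks: in Step (iii) you do not actually need Novikov for the upper bound, since a nonnegative local martingale is a supermartingale and $E[{\cal E}(pN)(t)]\le 1$ suffices after the Dol\'eans factorization and H\"older; and your various "for $C$ large enough" statements should be read as the paper reads them, namely that the constant in \eq{expintcon} is assumed large enough to dominate all the exponents that arise. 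With those understandings your argument is complete and has the merit of not outsourcing the $L^2$ property to an external reference.
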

\begin{proof}
The claim concerning \eq{uniqso} directly follows from pages 316-317
in Applebaum~\cite{app:levpro}. Furthermore, owing to conditions
{\bf C1} and {\bf C2}, we know that our market given by
\eq{stockassetm}-\eq{sdeou} satisfies the conditions as required by
Lemma 4.1 in Dai~\cite{dai:meavar}. Thus, our market has a unique
solution, which is $\{{\cal F}_{t}\}$-adapted, continuous, and
mean-square integrable as stated in Lemma~\ref{disprice}. In order
to prove \eq{exsol}, let
\begin{eqnarray}
&&X_{m}(t)=\int_{0}^{t}\alpha_{m}(Y(s^{-}))ds
+\int_{0}^{t}\beta_{m}(Y(s^{-}))'dW(s), \elabel{xalpha}
\end{eqnarray}
where, for any $s\in[0,T]$,
\begin{eqnarray}
\alpha_{m}(Y(s^{-}))&=&
b_{m}(Y(s^{-}))-\frac{1}{2}\sum_{n=1}^{d}\sigma_{mn}^{2}(Y(s^{-})),
\nonumber\\
\beta_{m}(Y(s^{-}))&=&(\sigma_{m1}(Y(s^{-})),...,\sigma_{md}(Y(s^{-})))'.
\nonumber
\end{eqnarray}
Then, by condition {\bf C1}, there exists some nonnegative constant
$D_{1}$ such that
\begin{eqnarray}
E\left[\int_{0}^{T}\left|\alpha_{m}(Y(s^{-}))\right|ds\right] &\leq&
D_{1}T+\left(B_{b}+\frac{1}{2}B_{\sigma}\right)T
e^{\sum_{i=1}^{h}y_{i0}}
\prod_{i=1}^{h}E\left[e^{L_{i}(\lambda_{i}T))}\right]
\elabel{rintegrable}\\
&<&\infty,
\nonumber
\end{eqnarray}
where we have used the facts that $L(\lambda t)$ is nonnegative and
nondecreasing in $t$, the independence assumption among
$L_{i}(\lambda_{i}\cdot)$ for $i\in\{1,...,h\}$, and
\begin{eqnarray}
&&a+b\|L(\lambda t)\|\leq\left(\frac{1}{\epsilon} \vee
a\right)e^{b\epsilon\|L(\lambda t)\|}\;\;\mbox{for any}\;\;
a\geq 0,\;b\geq 0,\;\epsilon>0, \elabel{usefuline}\\
&&Y^{t,y_{i}}_{i}(\hat{t})\leq y_{i}+L_{i}(\lambda_{i}\hat{t})-
L_{i}(\lambda_{i}t)\;\;\;\;\;\mbox{for any}\;\;\hat{t}\geq t,
\elabel{yleq}\\
&&E\left[e^{CL_{i}(\lambda_{i}t)}\right]
=\exp\left(\lambda_{i}t\int_{z_{i}>0}\left(e^{Cz_{i}}-1\right)
\nu_{i}(dz_{i})\right)<\infty. \elabel{basecon}
\end{eqnarray}
Similarly, we can show that
\begin{eqnarray}
E\left[\int_{0}^{T}\beta^{2}_{m}(Y(s^{-}))ds\right]<\infty.
\elabel{betami}
\end{eqnarray}
Note that $W(\cdot)$ and $L_{i}(\lambda_{i}\cdot)$ for
$i\in\{1,...,h\}$ are independent; $W$ is $\{{\cal
F}_{t},t\in[0,T]\}$-martingale; $\alpha_{m}(Y(t^{-}))$ and
$\beta_{m}(Y(t^{-}))$ are ${\cal F}_{t}$-adapted. Then, it follows
from Definition 4.1.1 in $\emptyset$ksendal \cite{oks:stodif} and
the associated It$\hat{o}$'s formula (e.g., Theorem 4.1.2 in
$\emptyset$ksendal \cite{oks:stodif}) that $S_{m}(t)$ given in
\eq{exsol} for each $m$ is the unique solution of \eq{stockassetm}.

Now, we show that $S_{m}(\cdot)$ for each $m\in\{1,...,d\}$ is a
square-integrable $\{{\cal F}_{t}\}$-semimartingale.
To do so, we rewrite \eq{stockassetm} in its integral form
\begin{eqnarray}
&&S_{m}(t)=S_{m}(0)+\int_{0}^{t}S_{m}(s)b_{m}(Y(s^{-}))ds
+\int_{0}^{t}S_{m}(s)\sum_{n=1}^{d}\sigma_{mn}(Y(s^{-}))dW_{n}(s).
\elabel{smint}
\end{eqnarray}
Then, the third term on the right-hand side of \eq{smint} is a
square-integrable $\{{\cal F}_{t}\}$-martingale. In fact, it follows
from \eq{uniqso} that, for each $i\in\{1,...,h\}$ and $\hat{t}>t$,
\begin{eqnarray}
\lambda_{i}\int_{t}^{\hat{t}}Y^{(t,y_{i})}_{i}(s)ds
&=&y_{i}+L_{i}(\lambda_{i}\hat{t})-L_{i}(\lambda_{i}t)
-Y^{(t,y_{i})}_{i}(\hat{t})
\elabel{disequine}\\
&\leq&y_{i}+L_{i}(\lambda_{i}\hat{t})-L_{i}(\lambda_{i}t)
\nonumber\\
&=&y_{i}+L_{i}(\lambda_{i}(\hat{t}-t)),
\nonumber
\end{eqnarray}
where the last equality in \eq{disequine} holds in distribution.
Thus, it follows from Condition {\bf C1} and \eq{exsol} in
Lemma~\ref{disprice} that
\begin{eqnarray}
E\left[\int_{0}^{T}\left(S_{m}(s)
\sum_{n=1}^{d}\sigma_{mn}(Y(s^{-}))\right)^{2}ds\right]
&\leq&ds_{m}^{2}CT^{\frac{1}{2}}\left(E\left[e^{C\|L(\lambda
T)\|}\right]\right)^{\frac{1}{2}}\elabel{dmeight}\\
&<&\infty, \nonumber
\end{eqnarray}
where $C$ is some positive constant and we have used Theorem 39 in
page 138 of Protter~\cite{pro:stoint} and the condition
\eq{expintcon}. Therefore, by Theorem 4.40(b) in page 48 of Jacod
and Shiryaev~\cite{jacshi:limthe}, we know that the third term in
\eq{smint} is a square-integrable $\{{\cal F}_{t}\}$-martingale.

Furthermore, by the same method, we can show that the second term on
the right-hand side of \eq{smint} is of finite variation a.s. and is
square-integrable over $[0,T]$. Therefore, we conclude that
$S_{m}(\cdot)$ for each $m\in\{1,...,d\}$ is a square-integrable
$\{{\cal F}_{t}\}$-semimartingale. Hence, we complete the proof of
Lemma~\ref{sprice}. $\Box$
\end{proof}

\vskip 0.2cm \noindent{\bf Proof of Proposition~\ref{disprice}}

\noindent It follows from Lemma~\ref{sprice} and the Ito's formula
that, for each $m\in\{1,...,d\}$,
\begin{eqnarray}
&&B^{D}_{m}(t)=\int_{0}^{t}D_{m}(s)(b_{m}(Y(s^{-}))-r)ds,
\elabel{dsdeb}\\
&&M^{D}_{m}(t)=\int_{0}^{t}D_{m}(s)
\sum_{n=1}^{d}\sigma_{mn}(Y(s^{-}))dW_{n}(s). \elabel{mdreps}
\end{eqnarray}
Note that, by the similar calculation as in \eq{dmeight}, we have
\begin{eqnarray}
&&E\left[\int_{0}^{t}\left(D_{m}(s)\sum_{n=1}^{d}\sigma_{mn}(Y(s^{-}))
\right)^{2}ds\right]<\infty\elabel{mdmartingale}
\end{eqnarray}
for all $t\in[0,T]$. Thus, it follows from Theorem 4.40(b) in page
48 of Jacod and Shiryaev~\cite{jacshi:limthe} that $M^{D}$ is an
$\{{\cal F}_{t}\}$-martingale. Furthermore, it follows from a
similar explanation with the end of the proof for Lemma~\ref{sprice}
that $B^{D}$ is a predictable process of finite variation and
square-integrable. Thus, we know that $D$ is a continuous $\{{\cal
F}_{t}\}$-semimartingale. Moreover, it is locally in $L^{2}(P)$
since we may take $\sigma_{n}\equiv\inf\{\tau:D^{2}(\tau)\geq n\}$
as the sequence of localizing times. Hence, we complete the proof of
Proposition~\ref{disprice}. $\Box$

\subsection{A Proposition Related to the VOMM}

First of all, we use ${\cal P}_{D}(\bar{\Theta})(D)$ to denote the
set of all signed $\bar{\Theta}$-martingale measures in the sense
that $Q(\Omega)=1$ and $Q\ll P$ with
\begin{eqnarray}
&&\frac{dQ}{dP}\in
L^{2}(P)\;\;\mbox{and}\;\;E\left[\frac{dQ}{dP}(u\cdot
D)(T)\right]=0\nonumber
\end{eqnarray}
for a signed measure $Q$ on $(\Omega,{\cal F})$ and all
$u\in\bar{\Theta}(D)$. Then, we have the following proposition.
\begin{proposition}\label{equimar}
Under conditions {\bf C1}, {\bf C2}, and \eq{expintcon}, the
following claims are true:
\begin{enumerate}
\item $\hat{Z}$ is a $\{{\cal F}_{t}\}$-martingale, where
$\hat{Z}(\cdot)$ is given in \eq{densitym};
\item The measure $Q^{*}$ defined in \eq{defineV}
is an equivalent martingale measure (EMM), and $Q^{*}\in {\cal
U}_{2}^{e}(D)$ that is defined in \eq{equivmeasure};
\item The measure $Q^{*}$ is the VOMM in the
sense that
\begin{eqnarray}
Var\left(\frac{dQ^{*}}{dP}\right)=\min_{Q\in {\cal
P}_{D}(\bar{\Theta})}Var\left(\frac{dQ}{dP}\right).
\nonumber
\end{eqnarray}
\end{enumerate}
\end{proposition}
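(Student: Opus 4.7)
The plan is to establish the three assertions in turn, exploiting the Feynman--Kac structure of $P(t,y)$ in \eq{pexactsolution} together with the specific form of the adjustment process $a(t)$ in \eq{adjustmentp}.

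For claim (1), I would apply It\^o's formula to $O(t)=P(t,Y(t))$. The Feynman--Kac representation implies that $P$ solves $\partial_t P+{\cal L}P=\rho P$ with terminal condition $P(T,y)=1$, where ${\cal L}$ is the generator of the NGOU driver $Y$. This yields
\[
dO(t)=\rho(Y(t^-))O(t^-)dt+O(t^-)\sum_{i=1}^{h}\int_{z_{i}>0}F(t,z_i)\tilde N_i(\lambda_i dt,dz_i),
\]
with $F$ as in \eq{fszy}. Since $D$ is a continuous semimartingale by Proposition~\ref{t:disprice}, ${\cal E}(-a\cdot D)$ is continuous and satisfies $d{\cal E}=-{\cal E}\,a'\,dD$. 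Substituting the decomposition \eq{dsde} and using the algebraic identity $a(t)'dB^D(t)=\rho(Y(t^-))dt$---which drops out of \eq{adjustmentp} and the definitions of $\rho$ and $B^D$---together with integration by parts in $\hat Z=O\,{\cal E}(-a\cdot D)/O_0$ (noting that $[O,{\cal E}]=0$ because $O$ has no continuous martingale component while ${\cal E}$ is continuous), the $\rho$-drift terms cancel exactly and $\hat Z$ is revealed to be a nonnegative $\{{\cal F}_t\}$-local martingale. To promote this to a genuine martingale and also establish $\hat Z(T)\in L^2(P)$ as demanded by \eq{equivmeasure}, I would use the localization $\{\sigma_n\}$ of \eq{localmean} and bound $E[\hat Z^2(\sigma_n\wedge t)]$ by exponential moments of $L_i(\lambda_i T)$ supplied by \eq{expintcon}, in the spirit of the estimates \eq{rintegrable}--\eq{basecon}, then pass to the limit by uniform integrability.

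For claim (2), positivity of $\hat Z$ (from $O>0$ in \eq{pexactsolution} and from the positivity of the continuous stochastic exponential) together with $E[\hat Z(T)]=1$ makes $Q^*$ an equivalent probability measure. To see that $D$ is a $Q^*$-local martingale, I would apply Girsanov's theorem in semimartingale form: the continuous martingale part of $\hat Z$ is $-\hat Z_-\,a'\,dM^D$, so the predictable covariation $d[\hat Z,D_m]/\hat Z_-$ collapses, via the identity $a'\,\mbox{diag}(D)\,\sigma\sigma'\,\mbox{diag}(D)=B(Y)'\,\mbox{diag}(D)$ implied by \eq{adjustmentp}, to $-D_m(b_m(Y)-r)dt$, exactly canceling the drift $dB^D_m$ given in \eq{dsdeb}. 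Combined with $\hat Z(T)\in L^2(P)$ from claim (1), this gives $Q^*\in{\cal U}_2^e(D)$.

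For claim (3), I would verify the equivalent conditions characterizing the VOMM stated in C\u ern\'y and Kallsen~\cite{cerkal:meavar}. The key input is the structural representation of $d\hat Z$ obtained in claim (1): the continuous martingale component is a stochastic integral of $-\hat Z_-\,a$ against $dM^D$, which---once the drift cancellation under $Q^*$ is accounted for---becomes an admissible integral against $dD$, while the pure-jump component driven by the compensated Poisson measures $\tilde N_i$ is $P$-orthogonal to every stochastic integral against the continuous, $W$-driven process $D$, by the assumed independence of $W$ and $L$. These two features together supply the representation-plus-orthogonality identified in the C\u ern\'y--Kallsen criterion with the VOMM. The main obstacle, and the step that demands the most care, is verifying that the candidate integrand $-\hat Z_-\,a$ is admissible in the precise sense of Definition~\ref{mvset}: this requires constructing an approximating sequence of simple strategies whose $D$-integrals converge in probability at every $t\in[0,T]$ and in $L^2(P)$ at $T$, which I would carry out by truncating with the localizing sequence $\{\sigma_n\}$ and applying dominated convergence on top of the $L^2$-bounds obtained in claim (1).
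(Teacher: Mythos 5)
Your treatment of claims (1) and (2) tracks the paper's own route (Lemmas~\ref{opadpI}, \ref{zfpp} and \ref{emmp}): the drift cancellation $b^{K}=a'b^{D}$ of \eq{equaoa}--\eq{adjustp} turns $\hat{Z}$ into ${\cal E}\left(M^{K}-(a\cdot M^{D})\right)$, hence a local martingale, the class-(D)/exponential-moment estimates promote it to a true martingale, and Girsanov handles the measure change. One caveat on (2): you only derive that $D$ is a $Q^{*}$-\emph{local} martingale. That suffices for membership in ${\cal U}_{2}^{e}(D)$ as defined in \eq{equivmeasure}, but the statement (and the paper's Lemma~\ref{emmp}) asserts the true $Q^{*}$-martingale property, and proving it --- together with $E\left[\hat{Z}(T)(u\cdot D)(T)\right]=0$ for all $u\in\bar{\Theta}(D)$, which you will need in claim (3) to place $Q^{*}$ in ${\cal P}_{D}(\bar{\Theta})$ --- is where most of the paper's effort in Lemma~\ref{emmp} goes (the moment bounds \eq{ezmdm}, \eq{oneoverhatz}, \eq{decsec}). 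Your sketch omits this step entirely.

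The genuine gap is in claim (3). The variance-optimality criterion is not ``representation plus orthogonality of the jump part''; it is that $\frac{dQ^{*}}{dP}$ itself lies in $\reals+\left\{(u\cdot D)(T):u\in\bar{\Theta}(D)\right\}$. Orthogonality of the jump component of the density \emph{process} to stochastic integrals of $D$ only identifies the projection of $\hat{Z}(T)$ onto that space; it does not show that $\hat{Z}(T)$ belongs to it. What makes the representation exact here is the terminal condition $O(T)=P(T,Y(T))=1$ from \eq{pexactsolution}, which you never invoke: since $D$ is continuous it gives
\begin{eqnarray}
\hat{Z}(T)=\frac{1}{O_{0}}{\cal E}(-a\cdot
D)(T)=\frac{1}{O_{0}}-\frac{1}{O_{0}}\left(\left({\cal E}(-a\cdot
D)a\right)\cdot D\right)(T),\nonumber
\end{eqnarray}
so the jump part of the density process contributes nothing at time $T$. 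Consequently the candidate integrand is $-{\cal E}(-a\cdot D)a/O_{0}$ --- the paper's $u^{\tau}$ in \eq{utaud} with $\tau=0$ --- and not $-\hat{Z}_{-}a$, which differs from it by the factor $O_{-}$; the latter is the integrand in the martingale representation of $\hat{Z}$ against $dM^{D}$, not in the representation of $\hat{Z}(T)$ against $dD$, and conflating the two is precisely where the orthogonality argument goes astray. You do correctly single out admissibility of the integrand as the hard step; the paper resolves it by checking the equivalent conditions of Theorem 2.1 of C\u{e}rn\'y and Kallsen~\cite{cerkal:meavar} for every stopped version $u^{\tau}$ together with the class-(D) property of $O{\cal E}((-aI_{(\tau,T]})\cdot D)$, which is what Theorem 3.25 and Proposition 3.13 of C\u{e}rn\'y and Kallsen~\cite{cerkal:strgen} require in order to identify $O$ and $a$ as the opportunity and adjustment processes and hence $Q^{*}$ as the VOMM.
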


We divide the proof of the proposition into demonstrating six lemmas
as follows.
\begin{lemma}\label{pintdif}
Under conditions {\bf C1}, {\bf C2}, and \eq{expintcon}, $P(t,y)$
defined in \eq{pexactsolution} is a solution of the following IPDE
\begin{eqnarray}
&&\left\{\begin{array}{ll} \frac{\partial}{\partial t}P(t,y)
=\rho(y)P(t,y) +\sum_{i=1}^{h}\lambda_{i}y_{i}
\frac{\partial}{\partial y_{i}}P(t,y)\\
\;\;\;\;\;\;\;\;\;\;\;\;\;\;\;\;\;\;\; -\sum_{i=1}^{h}\lambda_{i}
\int_{z_{i}>0}(P(t,y+z_{i}e_{i})-P(t,y))\nu_{i}(dz_{i}),\\
P(T,y)=1.
\end{array}
\right. \elabel{rhointdifeq}
\end{eqnarray}
for $y\in R_{c}^{h}$. Furthermore, we have
\begin{eqnarray}
&&P(t,y)\in C^{1,1}([0,T)\times R_{c}^{h},R^{1}),
\elabel{ppcontinuity}\\
&& E\left[\int_{0}^{T}|P(t,Y(t^{-}))|^{2}dt\right] <\infty,
\elabel{ppsisquint}\\
&&\sum_{i=1}^{h}E\left[\int_{0}^{T}\int_{z_{i}>0}
|P(t,Y(t^{-})+z_{i}e_{i})-P(t,Y(t^{-}))|^{2}
\nu(dz_{i})dt\right]<\infty. \elabel{psquarecon}
\end{eqnarray}
\end{lemma}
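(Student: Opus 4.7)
The plan is to view $P(t,y)$ as the Feynman--Kac expectation associated with the OU-type pure-jump process $Y$ of~\eq{sdeou}, then to establish its $C^{1,1}$ regularity by differentiating under the expectation sign, and finally to verify the two $L^2$ estimates. I would organize the argument in three steps corresponding to the three claims of the lemma.

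For the IPDE~\eq{rhointdifeq}, I would first identify the infinitesimal generator of $Y$. Since $Y$ has no Brownian component, It\^o's formula applied to a $C^{1,1}$ function $f$ gives
\[
\mathcal{L}f(y) \;=\; -\sum_{i=1}^{h} \lambda_i y_i \frac{\partial f}{\partial y_i}(y) \;+\; \sum_{i=1}^{h} \lambda_i \int_{z_i>0} [f(y+z_i e_i)-f(y)]\,\nu_i(dz_i).
\]
By the tower property together with the definition~\eq{pexactsolution}, the process $M(s)\equiv \exp\!\left(-\int_t^s \rho(Y(r))\,dr\right)P(s,Y(s))$ is a martingale on $[t,T]$. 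Applying It\^o to $M$ and setting its drift to zero yields $\partial_s P + \mathcal{L}P - \rho P = 0$, which after rearrangement is exactly~\eq{rhointdifeq}; the terminal condition $P(T,y)=1$ is immediate.

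Next, for the regularity~\eq{ppcontinuity}, I would exploit the explicit representation~\eq{uniqso} of $Y^{(t,y)}$: since $Y_i^{(t,y)}(s)=y_i e^{-\lambda_i(s-t)}+\int_t^s e^{-\lambda_i(r-t)}\,dL_i(\lambda_i r)$ depends linearly on $y_i$ and smoothly on $t$, differentiation under the expectation gives
\[
\frac{\partial P}{\partial y_i}(t,y) \;=\; -E_{t,y}\!\left[e^{-\int_t^T \rho(Y(s))\,ds}\!\int_t^T \partial_{y_i}\rho(Y(s))\,e^{-\lambda_i(s-t)}\,ds\right],
\]
with an analogous formula for $\partial_t P$. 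The interchange is justified by Condition~\textbf{C2}, which controls $\partial_y b$ and $\partial_y(\sigma\sigma')^{-1}$ and hence $\partial_y\rho$ by at most polynomial growth in $\|y\|$, combined with the exponential moment bound~\eq{basecon} that makes $E[e^{C\|Y(s)\|}]$ uniformly finite on $[0,T]$; the restriction $y\in R_c^h$ ensures $\|Y(s)\|$ stays bounded away from zero so the $(\sigma\sigma')^{-1}$ term does not blow up. Continuity of the derivatives in $(t,y)$ then follows from continuity of $\rho$ and its derivatives together with dominated convergence.

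Finally, the integrability~\eq{ppsisquint} is immediate because $\rho\geq 0$ forces $0\leq P(t,y)\leq 1$, whence $E\int_0^T |P(t,Y(t^-))|^2 dt \leq T$. The delicate estimate~\eq{psquarecon} I would handle by splitting the inner $z_i$-integral at $z_i=1$: on $\{z_i\leq 1\}$ the mean value theorem and the bound on $\partial_{y_i} P$ from the previous step give $|P(t,y+z_i e_i)-P(t,y)|^2\leq K(y)^2 z_i^2$, integrable against $\nu_i$ because $\int_{z_i\leq 1} z_i^2\nu_i(dz_i)<\infty$ (a consequence of $e^{Cz_i}-1 \geq \tfrac{1}{2}C^2 z_i^2$ for small $z_i$ combined with~\eq{expintcon}); on $\{z_i>1\}$ the trivial bound $|P(\cdot)-P(\cdot)|^2\leq 4$ suffices since $\nu_i(\{z_i>1\})<\infty$. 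Taking expectation over $Y(t^-)$ and invoking the exponential moment controls on $\|Y\|$ yields finiteness. The main obstacle will be rigorously justifying the differentiation under the expectation in the regularity step, because $\partial\rho$ has polynomial growth in $\|y\|$ that must be dominated uniformly in $t$ by integrable envelopes built from the exponential moments of $Y$; once this is established, both the Feynman--Kac derivation and the two integrability claims follow by routine manipulations.
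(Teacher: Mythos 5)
Your treatment of the $y$-regularity and of the two $L^2$ estimates is essentially the paper's own argument: differentiation under the expectation justified by the polynomial growth of $\partial_{y_i}\rho$ against the exponential moments \eq{basecon}, the trivial bound $0<P\le 1$ for \eq{ppsisquint} (in fact cleaner than the paper, which routes through its bound \eq{expbound}), and the split of the $z_i$-integral at $z_i=1$ with the mean value theorem for \eq{psquarecon}, exactly as in \eq{provesqu}. The genuine divergence, and the genuine gap, is in how you obtain the IPDE. You apply It\^o's formula to $M(s)=\exp(-\int_t^s\rho(Y(r))dr)P(s,Y(s))$ and set the drift to zero; but It\^o's formula for this pure-jump-plus-drift process already requires $P$ to be $C^1$ in $t$ (and the jump-compensation integral to converge, i.e.\ \eq{psquarecon}), which are precisely the facts you establish only afterwards. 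The ordering is circular as written, and more importantly the claim that $\partial_t P$ is obtained by ``an analogous formula'' to $\partial_{y_i}P$ does not hold: in $P(t,y)=E_{t,y}[e^{-\int_t^T\rho(Y(s))ds}]$ the variable $t$ enters both as the lower limit of the exponent's integral and as the initial time of the process $Y^{t,y}$, whose stochastic integral $\int_t^{\cdot}e^{-\lambda_i(s-t)}dL_i(\lambda_i s)$ is not differentiable in $t$ pathwise. A naive interchange of $\partial_t$ and $E$ therefore fails.

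The paper avoids this by never differentiating in $t$ under the expectation: it uses time-homogeneity to write $P(t,y)=g(T-t,y)$, computes the generator limit $\lim_{l\downarrow 0}l^{-1}(E_{0,y}[g(T-t,Y(l))]-g(T-t,y))$ once via It\^o (which needs only $C^{0,1}$ regularity plus \eq{psquarecon}) and once via the Markov semigroup identity \eq{hmnkcII}; equating the two limits simultaneously proves that the right $t$-derivative of $P$ exists and yields the IPDE, after which continuity of $\partial_t P$ is read off from the equation itself. To salvage your Feynman--Kac route you would need to first prove existence of $\partial_t P$ by some such device (e.g.\ differentiating $g(u,y)=E_{0,y}[e^{-\int_0^u\rho(Y(s))ds}]$ in $u$ using right-continuity of $s\mapsto\rho(Y(s))$ and dominated convergence), and only then invoke It\^o and the vanishing of the finite-variation part of the martingale $M$. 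With that insertion your argument closes; without it the central claim of the lemma rests on an interchange that is not justified.
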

\begin{proof}
It follows from conditions {\bf C1}, {\bf C2}, and \eq{uniqso} that,
for each $i\in\{1,...,h\}$,
\begin{eqnarray}
&&\|\rho(Y(t))\|\leq A_{\rho}+B_{\rho}\|Y(t)\|,
\elabel{boundrho}\\
&&\left\|\frac{\partial\rho(Y(t))}{\partial y_{i}}\right\|\leq
\bar{A}_{1}+\bar{A}_{2}\|Y(t)\|+\bar{A}_{3}\|Y(t)\|^{2}
+\bar{A}_{4}\|Y(t)\|^{3}, \elabel{boundrhoI}
\end{eqnarray}
where $\bar{A}_{i}$ for $i\in\{1,2,3,4\}$ are some nonnegative
constants, $A_{\rho}$ and $B_{\rho}$ are given by
\begin{eqnarray}
A_{\rho}=\frac{2(A_{b}+r)B_{b}}{b_{\sigma}}
+\frac{(A_{b}+r)^{2}}{b_{\sigma}K},\;\;\;\;\;\;
B_{\rho}=\frac{B_{\sigma}^{2}}{b_{\sigma}}\nonumber
\end{eqnarray}
with $K=\min\{y_{i0}e^{-\lambda_{i}T},i=1,...,h\}$. Then, based on
an idea as used in Benth {\em at al.}~\cite{benkar:merpor}, we can
prove Lemma~\ref{pintdif} by the following four steps.

First, by direct calculation, we know that $P(t,y)$ is finite for
any $(t,y) \in[0,T]\times R_{c}^{h}$, i.e.,
\begin{eqnarray}
P(t,y)\leq\exp\left(K_{1}(T-t)+B_{\rho}\sum_{i=1}^{h}
\frac{y_{i}}{\lambda_{i}}\right)<\infty, \elabel{expbound}
\end{eqnarray}
where the nonnegative constant $K_{1}$ is given by
\begin{eqnarray}
K_{1}=A_{\rho}+\sum_{i=1}^{h}\lambda_{i}\int_{z_{i}>0}
\left(e^{\frac{B_{\rho}z_{i}}{\lambda_{i}}}-1\right)\nu_{i}(dz_{i}).
\nonumber
\end{eqnarray}

Second, we prove that $P\in C^{0,1}\left([0,T]\times
R_{c}^{h},R^{1}\right)$ and the mapping
$(t,y)\rightarrow\frac{\partial P}{\partial y_{i}}(t,y)$ for each
$i\in\{1,...,h\}$ is continuous.. The continuity of $P(\cdot,y)$ for
each $y\in R_{c}^{h}$ can be shown as follows. Owing to the
condition \eq{linearg} and the fact \eq{disequine}, we know that
\begin{eqnarray}
\exp\left(\int_{t}^{T}\rho(Y^{t,y}(s))ds\right)
\leq\exp\left(A_{\rho}T+\sum_{i=1}^{h}\frac{B_{\rho}}{\lambda_{i}}
(y_{i}+L_{i}(\lambda_{i}T))\right). \elabel{gcontiny}
\end{eqnarray}
By \eq{expintcon} and \eq{basecon}, we know that the function on the
right-hand side of \eq{gcontiny} is integrable for each fixed $y\in
R_{c}^{h}$. Then, it follows from the Lebesgue's dominated
convergence theorem that $P(t,y)$ for each $y$ is continuous in
terms of $t\in[0,T]$.

Next, we show that $\frac{\partial P}{\partial y_{i}} (t,\cdot)$
with $i\in\{1,...,h\}$ for all $t\in[0,T]$ exist and are continuous.
In fact, consider an arbitrary but fixed point $y$ and take a
compact set $U\subset R_{c}^{h}$ such that $y$ is in the interior of
$U$. Note that all points in $U$ can be assumed to be bounded by
some positive constant $M$. Thus, by \eq{boundrhoI}, \eq{uniqso},
\eq{yleq} and \eq{usefuline}, we have, for all $s\geq t$,
\begin{eqnarray}
\;\;\;\;\;\;\;\;\left|\frac{\partial}{\partial
y_{i}}\rho(Y^{t,y}(s))\right|
\leq\left(\sum_{i=1}^{4}\bar{A}_{i}\right)e^{3hM
+3\sum_{i=1}^{h}L_{i}(\lambda_{i}T)},\elabel{dryi}
\end{eqnarray}
where $Y^{t,y}(s)$ denotes the process with the initial value $y$ at
time $t$.
Owing to \eq{expintcon} and \eq{basecon}, the function on the
right-hand side of \eq{dryi} is integrable. Thus, it follows from
Theorem 2.27(b) in Folland~\cite{fol:reaana} that the partial
derivative of $\int_{t}^{T}\rho(Y^{t,y}(s))ds$ in terms of $y_{i}$
for each $i\in\{1,...,h\}$ exists. Hence, we have
\begin{eqnarray}
&&\left|\frac{\partial}{\partial y_{i}}
\left(e^{\int_{t}^{T}\rho(Y^{t,y}(s))ds}\right)\right|
\leq T\left(\left(\sum_{i=1}^{4}\bar{A}_{i}\right)
e^{\left(A_{\rho}T+3hM+B_{\rho}\sum_{i=1}^{h}\frac{1}{\lambda_{i}}\right)
+\sum_{i=1}^{h}\left(3+\frac{B_{\rho}}{\lambda_{i}}\right)
L_{i}(\lambda_{i}T)}\right). \elabel{intmder}
\end{eqnarray}
Again, by \eq{expintcon} and \eq{basecon}, we know that the function
on the right-hand side of \eq{intmder} is integrable. Therefore, by
Theorem 2.27(b) in Folland~\cite{fol:reaana}, we can conclude that
$P(t,y)$ is differentiable with respect to $y\in R_{c}^{h}$.
Furthermore, by \eq{uniqso}, \eq{intmder} and the Lebesgue's
dominated convergence theorem, we obtain that the mapping
$(t,y)\rightarrow\frac{\partial P}{\partial y_{i}}(t,y)$ for each
$i\in\{1,...,h\}$ is continuous. Hence, $P(t,y)\in
C^{0,1}\left([0,T]\times R_{c}^{h},R^{1} \right)$.

Third, we prove the square-integrable property \eq{psquarecon} to be
true. In fact, it follows from condition \eq{expintcon} that
$\nu_{i}(\cdot)$ ($i\in\{1,...,h\}$) is a $\sigma$-finite measure
since $\nu_{i}([\epsilon,\infty)) <\infty$ for any $\epsilon>0$. In
addition, it is easy to see that the nonnegative function
$|P(t,Y(t^{-})+z_{i}e_{i})-P(t,Y(t^{-}))|^{2}$ is a measurable one
on the product space $[0,T]\times R_{c}^{h} \times\Omega$. Hence, by
the mean value theorem, \eq{dryi}, \eq{intmder}, the Jensen's
inequality, and the differentiability of $P(t,y)$ in $y$, we have
\begin{eqnarray}
&&E\left[\int_{0}^{T}\int_{z_{i}>0}
|P(t,Y(t^{-})+z_{i}e_{i})-P(t,Y(t^{-}))|^{2}
\nu_{i}(dz_{i})dt\right]
\elabel{provesqu}\\
&\leq&K_{3}K_{4}
\left(e^{(6+\frac{2B_{\rho}}{\lambda_{i}})}\int_{0<z_{i}<1}
z_{i}^{2}\nu_{i}(dz_{i})+\int_{z_{i}\geq
1}\left(e^{(8+\frac{2B_{\rho}}{\lambda_{i}})z_{i}}-1\right)\nu_{i}(dz_{i})
+\int_{z_{i}\geq 1}\nu_{i}(dz_{i})\right) \nonumber\\
&<&\infty, \nonumber
\end{eqnarray}
where $K_{3}$ and $K_{4}$ are some positive constants.
Furthermore, it follows from \eq{expbound}, \eq{yleq}, and
\eq{expintcon} that \eq{ppsisquint} is true.

Fourth, we prove that $P(t,y)$ satisfies the IPDE \eq{rhointdifeq}.
In fact, for each $t\in[0,T)$, it follows from the time-homogeneity
of $Y$ that
\begin{eqnarray}
g(T-t,y)\equiv
E_{0,y}\left[e^{-\int_{0}^{T-t}\rho\left(Y(s)\right)ds}\right]
=E_{t,y}\left[e^{-\int_{t}^{T}\rho\left(Y(s)\right)ds}\right]
=P(t,y). \elabel{gpsit}
\end{eqnarray}
Since $P(t,y)\in C^{0,1}\left([0,T]\times R_{c}^{h}\right)$, it
follows from the It$\hat{o}$'s formula (see, e.g., Theorem 1.14 and
Theorem 1.16 in pages 6-9 of $\emptyset$ksendal and
Sulem~\cite{okssul:appsto}) that, for each fixed $t$,
\begin{eqnarray}
&&g(T-t,Y^{0,y}(l))
\elabel{appitoI}\\
&=&g(T-t,y)-\sum_{i=1}^{h}\lambda_{i}
\int_{0}^{l}Y^{0,y_{i}}_{i}(s^{-})\frac{\partial g} {\partial
y_{i}}(T-t,Y^{0,y}(s^{-}))ds
\nonumber\\
&&+\sum_{i=1}^{h}\int_{0}^{l}
\int_{z_{i}>0}(g(T-t,Y^{0,y}(s^{-})+z_{i}e_{i})
-g(T-t,Y^{0,y}(s^{-})))N_{i}(\lambda_{i}ds,dz_{i}). \nonumber
\end{eqnarray}
Furthermore, let $\hat{g}(t,z_{i},\omega) \equiv
g(T-t,Y^{0,y}(s^{-},\omega)+z_{i}e_{i})
-g(T-t,Y^{0,y}(s^{-}),\omega))$ for each $z_{i}\in(0,\infty)$,
$i\in\{1,...,h\}$ and $\omega\in\Omega$. Then, $\hat{g}$ is $\{{\cal
F}_{t}\}$-predictable. Thus, owing to \eq{psquarecon} (here we need
to use an arbitrary but fixed $y$ to replace $y_{0}$), it follows
from Theorem 4.2.3 in Applebaum~\cite{app:levpro} (or the
explanation in page 61-62 of Ikeda and
Watanabe~\cite{ikewat:stodif}) that the last term in \eq{appitoI} is
a semimartingale. Thus, taking expectations on both sides of
\eq{appitoI}, we get
\begin{eqnarray}
&&\frac{E[g(T-t,Y^{0,y}(l))]-g(T-t,y)}{l}
\nonumber\\
&=&\sum_{i=1}^{h}\frac{\lambda_{i}}{l}
\int_{0}^{l}E\left[Y^{0,y_{i}}_{i}(s^{-}) \frac{\partial g}{\partial
y_{i}} (T-t,Y^{0,y}(s^{-}))\right]ds
\nonumber\\
&&-\sum_{i=1}^{h}\frac{\lambda_{i}}{l}\int_{0}^{l}
\int_{z_{i}>0}E[g(T-t,Y^{0,y}(s^{-})+z_{i}e_{i})
-g(T-t,Y^{0,y}(s^{-}))]\nu_{i}(dz_{i})ds. \nonumber
\end{eqnarray}
Then, by letting $l\downarrow 0$, we know that $P(t,\cdot)$ is in
the domain of the infinitesimal generator of $Y$, which is denoted
by ${\cal A}$, that is,
\begin{eqnarray}
{\cal A}g(T-t,y) &=&\sum_{i=1}^{h}\lambda_{i}y_{i}
\frac{\partial}{\partial y_{i}}g(T-t,y)
\elabel{ggenerator}\\
&&-\sum_{i=1}^{h}\lambda_{i}
\int_{z_{i}>0}(g(T-t,y+z_{i}e_{i})-g(T-t,y))\nu_{i}(dz_{i}).
\nonumber
\end{eqnarray}
Now, by \eq{expbound}, we see that $g(T-t,y)=P(t,Y^{0,y}(l))\in
L^{2}(\Omega,P)$ for each $t\in[0,T)$ and all $l$ in a neighborhood
of zero such that $t-l\leq T$. Thus, we have
\begin{eqnarray}
E_{0,y}[g(T-t,Y(l))]
&=&E_{0,y}\left[E_{0,Y(l)}\left[e^{-\int_{0}^{T-t}
\rho\left(Y(s)\right)ds}\right]\right]
\elabel{hmnkcII}\\
&=&E_{0,y}\left[E_{0,y}\left[\left.e^{-\int_{0}^{T-t}
\rho\left(Y(s+l)\right)ds}\right|{\cal F}_{l}\right]\right]
\nonumber\\
&=&E_{0,y}\left[e^{-\int_{l}^{T-t+l}\rho\left(Y(s))ds\right)}\right]
\nonumber\\
&=&E_{0,y}\left[e^{-\int_{0}^{T-t+l}\rho(Y(s))ds}
e^{\int_{0}^{l}\rho\left(Y(s))ds\right)}\right], \nonumber
\end{eqnarray}
where the second equality in \eq{hmnkcII} follows from the Markov
property of $Y$ (e.g., Proposition 7.9 in
Kallenberg~\cite{kal:foumod}). Then, we have
\begin{eqnarray}
&&\frac{E_{0,y}[g(T-t,Y(l))]-g(T-t,y)}{l}
\elabel{hmnkcIII}\\
&=&\frac{1}{l}E_{0,y}\left[e^{-\int_{0}^{T-t+l}\rho(Y(s))ds}
\left(e^{\int_{0}^{l}\rho(Y(s))ds}-1\right)\right]
+\frac{g(T-t+l,y)-g(T-t,y)}{l}. \nonumber
\end{eqnarray}
Now, by the fundamental theorem of calculus, as $l\downarrow 0$ and
a.s., we have
\begin{eqnarray}
&&e^{-\int_{0}^{T-t+l}\rho(Y^{0,y}(s))ds}\left\{\frac{1}{l}
\left(e^{\int_{0}^{l}\rho(Y^{0,y}(s))ds}-1\right)\right\}
\rightarrow \rho(y)e^{-\int_{0}^{T-t}\rho(Y^{0,y}(s))ds}.
\elabel{calcuint}
\end{eqnarray}
Furthermore, by the mean-value theorem, we have
\begin{eqnarray}
\frac{1}{l}\left|e^{\int_{0}^{l}\rho(Y^{0,y}(s))ds}-1\right|
&\leq&\sup_{l\in[0,T]}\left|\rho(Y^{0,y}(l))
e^{\int_{0}^{l}\rho(Y^{0,y}(s))ds}\right|. \elabel{uniexpb}
\end{eqnarray}
Since the function in the left-hand side of \eq{calcuint} is
uniformly bounded by an integrable function, it follows from the
dominated convergence theorem that the right-derivative of
$g(T-\cdot,y)$ at $t$ exists and satisfies
\begin{eqnarray}
{\cal A}g(T-t,y)=\rho(y)g(T-t)+\frac{\partial g} {\partial
t}(T-t,y). \elabel{eqngp}
\end{eqnarray}
Hence, by \eq{gpsit} and \eq{eqngp}, we know that $P(t,y)$ satisfies
\eq{rhointdifeq}. In addition, we have
\begin{eqnarray}
|P(t,y+z_{i}\delta_{ij})-P(t,y)|
&\leq&K_{5}E\left[e^{3\sum_{j=1}^{h}(2L_{j}(\lambda_{j}T)
+z_{i}\delta_{ij})}\left( e^{\sum_{j=1}^{h}\left(
\frac{B_{\rho}}{\lambda_{j}}(2L_{j}(\lambda_{j}T)+z_{i}\delta_{ij})\right)}
\right)\right]z_{i}.\nonumber
\end{eqnarray}
where $K_{5}$ is some positive constant. Thus, by the Lebesgue's
dominated convergence theorem, we can conclude that
\begin{eqnarray}
\int_{z_{i}>0}|P(t,y+z_{i}e_{i})-P(t,y)| \nu_{i}(dz_{i}) \nonumber
\end{eqnarray}
is continuous in $t$. Therefore, it follows from \eq{rhointdifeq}
that $\frac{\partial P}{\partial t}(t,y)$ is continuous in
$t\in[0,T)$, which implies that $P\in C^{1,1}\left([0,T)\times
R_{c}^{h},R^{1}\right)$. Hence, we complete the proof of
Lemma~\ref{pintdif}. $\Box$
\end{proof}
\begin{lemma}\label{opadp}
Let $O(t)\equiv P(t,Y(t))$ defined in \eq{opyt}. Then, under
conditions {\bf C1}, {\bf C2}, and \eq{expintcon}, $O$ is a
$(0,1]$-valued semimartingale with $O(T)=1$. Furthermore, define
\begin{eqnarray}
&&K\equiv{\cal L}(O)\equiv\left(\frac{1}{O_{-}}\right)\cdot
O\;\;\mbox{with}\;\;K(0)=0\;\;\mbox{and}\;\;O_{-}(t)\equiv O(t^{-}).
\elabel{kloo}
\end{eqnarray}
Then, $K$ is an $\{{\cal F}_{t}\}$-semimartingale and has the
following canonical decomposition
\begin{eqnarray}
&&dK(t)\equiv d{\cal L}(O)(t)=\rho(Y(t^{-}))dt+ \sum_{i=1}^{h}
\int_{z_{i}>0}F(t,z_{i})\tilde{N}_{i}(\lambda_{i}dt,dz_{i}),
\elabel{ksde}
\end{eqnarray}
where, $F(t,z_{i},\omega))$ is defined in \eq{fszy}.
\end{lemma}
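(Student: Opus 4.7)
The plan is to apply the jump-It\^o formula to $O(t)=P(t,Y(t))$, using the regularity $P\in C^{1,1}$ from \Lemma{pintdif}, and then to use the IPDE \eq{rhointdifeq} satisfied by $P$ to collapse the drift into $\rho(Y(t^-))\,dt$ and to identify the martingale part as a compensated Poisson integral.

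First I would verify that $O$ takes values in $(0,1]$ with $O(T)=1$. Because $\rho(y)=B(y)'[\sigma(y)\sigma(y)']^{-1}B(y)\geq 0$ (the inverse matrix being positive definite by \eq{lineargII}), the integrand in \eq{pexactsolution} lies in $(0,1]$, whence $0<P(t,y)\leq 1$ and $P(T,y)=1$ by the terminal convention. This strict positivity also legitimizes the division by $O_-=P(t,Y(t^-))$ in \eq{kloo}.

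Next, since $Y$ has no Brownian component and evolves as the sum of the deterministic drift $-\Lambda Y(t^-)\,dt$ and jumps generated by $N_i(\lambda_i ds,dz_i)$, the jump-It\^o formula (invoked in the same form as in the proof of \Lemma{pintdif}) produces three contributions to $dO(t)$: the $\partial_t P$ term, the finite-variation drift $-\sum_{i=1}^h \lambda_i Y_i(t^-)\partial_{y_i}P$ coming from the ODE part of $Y$, and a jump integral against the uncompensated $N_i$. Substituting the IPDE \eq{rhointdifeq} for $\partial_t P$ causes the two drift pieces involving $\partial_{y_i} P$ to cancel exactly, while the $\nu_i$-integral appearing in the IPDE combines with the $N_i$-integral to form the compensated measures $\tilde{N}_i(\lambda_i dt,dz_i)$. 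Dividing the resulting expression by $O(t^-)$ and recognizing the ratio as $F(t,z_i)$ from \eq{fszy} yields exactly \eq{ksde}.

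Finally, to conclude that $K$ is a genuine $\{\mathcal{F}_t\}$-semimartingale, I would verify that the compensated Poisson integral on the right-hand side of \eq{ksde} is a locally square-integrable martingale. This is immediate from the square-integrability bound \eq{psquarecon} established in \Lemma{pintdif}, combined with standard results on stochastic integrals against compensated Poisson measures (e.g., Theorem 4.2.3 in Applebaum~\cite{app:levpro}); the drift term $\rho(Y(t^-))\,dt$ is trivially of locally finite variation. I expect the main obstacle to be the bookkeeping in the jump-It\^o computation — in particular, keeping track of signs and compensator conventions so that the IPDE substitution produces a clean cancellation of the $\partial_{y_i}P$ drift and a clean emergence of $\tilde{N}_i$ from the $N_i$-integral and the $\nu_i$-integral together; a secondary technical point is justifying the use of It\^o's formula at the boundary $t\uparrow T$, which is handled by the continuity of $\partial_t P$ on $[0,T)$ from \eq{ppcontinuity} together with the terminal condition $P(T,\cdot)=1$.
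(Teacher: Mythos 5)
Your proposal is correct and follows essentially the same route as the paper: apply the jump-It\^o formula to $P(t,Y(t))$, use the IPDE \eq{rhointdifeq} to cancel the $\partial_{y_i}P$ drift and produce the compensated measures (yielding the paper's intermediate identity for $dO$), divide by $O_-$ to obtain \eq{ksde}, and invoke the square-integrability bounds from \Lemma{pintdif} to confirm the martingale part. The only cosmetic difference is that the paper states the post-substitution form of $dO$ directly and then verifies the martingale and finite-variation properties of its two pieces separately, which is exactly the bookkeeping you anticipate.
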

\begin{proof}
First, we show that $O$ is an $\{{\cal F}_{t}\}$-semimartingale. In
fact, it follows from the Ito's formula (see, e.g., Theorem 1.14 and
Theorem 1.16 in pages 6-9 of $\emptyset$ksendal and
Sulem~\cite{okssul:appsto}) and Lemma \ref{pintdif} that
\begin{eqnarray}
O(t)&=&P(0,y_{0})+\int_{0}^{t}\rho(Y(s^{-}))P(s,Y(s^{-}))ds
\elabel{lsemin}\\
&&+\sum_{i=1}^{h}\int_{0}^{t}
\int_{z_{i}>0}(P(s,Y(s^{-})+z_{i}e_{i})
-P(s,Y(s^{-})))\tilde{N}_{i}(\lambda_{i}ds,dz_{i}). \nonumber
\end{eqnarray}
Then, by Lemma~\ref{pintdif} and the claim in pages 61-62 of Ikeda
and Watanable~\cite{ikewat:stodif}, we know that the third term in
the right-hand side of \eq{lsemin} is an $\{{\cal
F}_{t}\}$-martingale. Furthermore, by \eq{boundrho} and the similar
proof as used for Lemma~\ref{sprice}, we know that the second term
on the right-hand side of \eq{lsemin} is of finite variation a.s.
Hence, we get that $O$ is an $\{{\cal F}_{t}\}$-semimartingale.
Thus, it follows from \eq{lsemin} and the definition of $K(t)$ that
\eq{ksde} is true.

Second, $M^{K}$ defined as follows is an $\{{\cal
F}_{t}\}$-martingale,
\begin{eqnarray}
M^{K}(t)&=&\sum_{i=1}^{h}\int_{0}^{t}
\int_{z_{i}>0}F(s,z_{i})\tilde{N}_{i}(\lambda_{i}ds,dz_{i}).
\elabel{mkreps}
\end{eqnarray}
In fact, by the mean-value theorem, \eq{boundrho}, \eq{uniqso}
\eq{expintcon}, and the fact that $\nu_{i}(\cdot)$
($i\in\{1,...,h\}$) is a $\sigma$-finite measure since
$\nu_{i}([\epsilon,\infty)) <\infty$ for any $\epsilon>0$, we have
\begin{eqnarray}
&&E\left[\int_{0}^{T}\int_{z_{i}>0}
\left|F(t,z_{i})\right|^{2}\nu(dz_{i})dt\right]<\infty.
\elabel{fracpp}
\end{eqnarray}
Thus, it follows from \eq{fracpp} and the claims in pages 61-62 of
Ikeda and Watanable~\cite{ikewat:stodif} that $M^{K}$ is an $\{{\cal
F}_{t}\}$-martingale. Therefore, we can conclude that $K$ is an
$\{{\cal F}_{t}\}$-semimartingale. Hence, Lemma~\ref{opadp} is true.
$\Box$
\end{proof}
\begin{lemma}\label{opadpI}
Let $b^{D}$ and $c^{D}$ be the drift and the covariance matrix
processes associated with $D$, $b^{K}$ is the drift process
associated with $K$.
Then, under conditions {\bf C1}, {\bf C2}, and \eq{expintcon}, we
have
\begin{eqnarray}
&&b^{K}=\left(b^{D}\right)'\left(c^{D}\right)^{-1}b^{D}.
\elabel{equaoa}
\end{eqnarray}
Furthermore, the process $a$ defined in \eq{adjustmentp} satisfies
the following relationship,
\begin{eqnarray}
&&a\equiv\left(c^{D}\right)^{-1}b^{D}. \elabel{adjustp}
\end{eqnarray}
\end{lemma}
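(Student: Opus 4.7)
\textbf{Proof plan for Lemma~\ref{opadpI}.} The plan is to read off the semimartingale characteristics $b^D$, $c^D$, and $b^K$ directly from the canonical decompositions already established for $D$ in Proposition~\ref{disprice} and for $K$ in Lemma~\ref{opadp}, and then verify both identities by elementary matrix algebra in which the $\mbox{diag}(D(t))$ factors cancel.

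First I would extract $b^D(t)=\mbox{diag}(D(t))B(Y(t^-))$ from the drift representation \eq{dsdeb}, with $B(y)$ as in \eq{rhobs}. Then I would compute the instantaneous covariance of the martingale part using \eq{mdreps}: componentwise, $d\langle M^D_m, M^D_k\rangle_t = D_m(t)D_k(t)(\sigma(Y(t^-))\sigma(Y(t^-))')_{mk}\,dt$, which gives $c^D(t)=\mbox{diag}(D(t))\sigma(Y(t^-))\sigma(Y(t^-))'\mbox{diag}(D(t))$. Both matrices are invertible: $\mbox{diag}(D(t))$ because $D_m(t)>0$ by \eq{exsol} in Lemma~\ref{sprice}, and $\sigma\sigma'$ by condition \eq{lineargII}. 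From the canonical decomposition \eq{ksde}, the drift of $K$ is $b^K(t)=\rho(Y(t^-))$, and by definition \eq{rhobsI} this equals $B(Y(t^-))'(\sigma(Y(t^-))\sigma(Y(t^-))')^{-1}B(Y(t^-))$.

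With these three identifications in hand, both claims reduce to substitution. Using $(\mbox{diag}(D)\,\sigma\sigma'\,\mbox{diag}(D))^{-1}=\mbox{diag}(D)^{-1}(\sigma\sigma')^{-1}\mbox{diag}(D)^{-1}$, the outer diagonal factors carried by $b^D$ cancel the inner ones in $(c^D)^{-1}$, leaving $(b^D)'(c^D)^{-1}b^D = B'(\sigma\sigma')^{-1}B = \rho(Y(t^-)) = b^K$. The same cancellation produces $(c^D)^{-1}b^D = \mbox{diag}(D(t))^{-1}(\sigma(Y(t^-))\sigma(Y(t^-))')^{-1}B(Y(t^-))$, which coincides with $a(t)$ as defined in \eq{adjustmentp}. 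There is no substantive obstacle here: the whole statement is bookkeeping once the three characteristics are correctly read off, and the only point to watch is the diagonal-factor inversion formula, which in turn relies on the strict positivity of $D_m(t)$ from \eq{exsol} and the invertibility of $\sigma\sigma'$ from \eq{lineargII}.
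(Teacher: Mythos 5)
Your proposal is correct and follows essentially the same route as the paper: identify $b^{D}$, $c^{D}$ from the decomposition of $D$ (via \eq{dsdeb}--\eq{mdreps}) and $b^{K}=\rho(Y(t^{-}))$ from \eq{ksde}, then verify both identities by cancellation of the $\mbox{diag}(D(t))$ factors. The paper states this even more tersely ("by simple calculations"), so your added remarks on the invertibility of $\mbox{diag}(D(t))$ and $\sigma\sigma'$ are a harmless elaboration, not a departure.
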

\begin{proof}
First of all, it follows from Lemma~\ref{disprice} and
Lemma~\ref{opadp} that
\begin{eqnarray}
b^{D}(t)&=&(D_{1}(t)(b_{1}(Y(t^{-}))-r),...,D_{d}(t)(b_{d}(Y(t^{-}))-r))',
\elabel{ddeI}\\
c^{D}(t)&=&\mbox{diag}(D(t))\left(\sigma(Y(t^{-}))\sigma(Y(t^{-}))'\right)
\mbox{diag}(D(t)),
\elabel{ddeII}\\
b^{K}(t)&=&O^{-1}(t)b^{O}(t)=\rho(Y(t^{-})).
\elabel{ddeIII}
\end{eqnarray}
Then, by simple calculations, we know that \eq{equaoa} and
\eq{adjustp} are true. Hence, we complete the proof of
Lemma~\ref{opadpI}. $\Box$
\end{proof}

For convenience, we will use $C_{ij}^{D}\equiv[D_{i},D_{j}]$ to
denote the co-quadratic variation processes with $i,j\in\{1,...,d\}$
for the process $D$ and write interchangeably $c^{D_{i}D_{j}}\equiv
c_{ij}^{D}$ and $c^{D_{i}}=c_{ii}^{D}$. Furthermore, similar
notations are also used for other processes related in the following
discussions.
\begin{lemma}\label{zfpp}
Under conditions {\bf C1}, {\bf C2}, and \eq{expintcon}, $\hat{Z}$
is an $\{{\cal F}_{t}\}$-and $P$-martingale.
\end{lemma}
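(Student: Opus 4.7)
\textbf{Proof plan for Lemma \ref{zfpp}.} The strategy is to compute $d\hat{Z}$ via integration by parts and check that the two drift contributions cancel exactly, leaving a local martingale that can then be promoted to a genuine martingale via an $L^{2}$-bound.

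The first step is to apply the It\^o product rule to $\hat{Z}(t)=O(t)\mathcal{E}(-a\cdot D)(t)/O_{0}$. Since $D$ is continuous by Proposition~\ref{t:disprice} and $\mathcal{E}(-a\cdot D)$ is therefore continuous, while $O$ has a finite-variation continuous drift part and a pure-jump martingale part by \eqref{e:lsemin}, the quadratic covariation $[O,\mathcal{E}(-a\cdot D)]$ vanishes. Writing $\mathcal{E}(-a\cdot D)$ as the solution of $d\mathcal{E}=-\mathcal{E}_{-}\,a'\,dD=-\mathcal{E}_{-}\,a'(dM^{D}+dB^{D})$ and using Lemma~\ref{t:opadp} in the form $dO=O_{-}dK$ with $dK$ as in \eqref{e:ksde}, one obtains
\begin{eqnarray}
d\hat{Z}(t) &=& \hat{Z}(t^{-})\Bigl[-a(t)'\bigl(dM^{D}(t)+dB^{D}(t)\bigr)+\rho(Y(t^{-}))dt\nonumber\\
&&\qquad\quad +\sum_{i=1}^{h}\int_{z_{i}>0}F(t,z_{i})\tilde{N}_{i}(\lambda_{i}dt,dz_{i})\Bigr].\nonumber
\end{eqnarray}

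The second step is the key algebraic cancellation. Since $dB^{D}=b^{D}dt$ and by Lemma~\ref{t:opadpI} one has $a=(c^{D})^{-1}b^{D}$ and $b^{K}=(b^{D})'(c^{D})^{-1}b^{D}=\rho(Y(t^{-}))$, the finite-variation terms satisfy $-a'b^{D}+\rho(Y(t^{-}))=0$. Hence $\hat{Z}$ has no drift and
\begin{eqnarray}
d\hat{Z}(t)=\hat{Z}(t^{-})\Bigl[-a(t)'dM^{D}(t)+\sum_{i=1}^{h}\int_{z_{i}>0}F(t,z_{i})\tilde{N}_{i}(\lambda_{i}dt,dz_{i})\Bigr],\nonumber
\end{eqnarray}
so $\hat{Z}$ is an $\{\mathcal{F}_{t}\}$-local martingale with $\hat{Z}(0)=1$.

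The third step is to upgrade this local martingale to a true $P$-martingale. Since $P(t,y)\in(0,1]$ by \eqref{e:pexactsolution} (the integrand is at most $1$), one has $0<O(t)\le 1$, so it suffices to control $\mathcal{E}(-a\cdot D)$. Using \eqref{e:adjustmentp} together with the lower bounds on $\sigma\sigma'$ via \eqref{e:lineargII} and the strictly positive lower bound $Y_{i}(t)\ge y_{i0}e^{-\lambda_{i}T}$ from \eqref{e:uniqso}, the integrand $a$ admits deterministic polynomial bounds in $\|Y\|$. Combined with the exponential moment condition \eqref{e:expintcon} on $\nu_{i}$ and the Kazamaki/Novikov-type estimates available from \eqref{e:basecon}, one can produce a uniform $L^{2}$-bound $\sup_{t\in[0,T]}E[\hat{Z}(t)^{2}]<\infty$; this, together with nonnegativity and the local-martingale property, forces $\hat{Z}$ to be a uniformly integrable true martingale on $[0,T]$. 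I would carry out the $L^{2}$-estimate along a localizing sequence $\sigma_{n}$ (taken, as in \eqref{e:localmean}, so that $D^{2}$ is bounded on $[0,\sigma_{n}]$), then pass to the limit by Fatou and dominated convergence.

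The expected main obstacle is precisely this last step: while the drift cancellation is a clean computation, controlling $E[\mathcal{E}(-a\cdot D)(t)^{2}]$ uniformly in $t$ requires combining the lower bound $b_{\sigma}\|Y\|$ on $\sigma\sigma'$, the linear-growth bound on $B$ (hence quadratic growth on $\rho$), and the exponential moment condition on the subordinator $L$, much in the spirit of the estimates \eqref{e:rintegrable}--\eqref{e:basecon} already developed for Lemma~\ref{t:sprice}. The other routine checks (integrability of $a$ with respect to $D$, and integrability of $F(t,z_{i})$ against $\tilde{N}_{i}$) follow from \eqref{e:ppsisquint}--\eqref{e:psquarecon} in Lemma~\ref{t:pintdif} and the bounds on $a$ obtained above.
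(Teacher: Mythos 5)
Your proposal is correct and follows essentially the same route as the paper: the paper obtains $\hat{Z}={\cal E}(M^{K}-(a\cdot M^{D}))$ via Yor's formula for stochastic exponentials (Corollary 8.7(b) of Jacod--Shiryaev) rather than the It\^o product rule, but this yields exactly your drift cancellation $b^{K}=a'b^{D}$ from Lemma~\ref{t:opadpI}, and it then upgrades the local martingale by showing $\hat{Z}$ is of class (D) through the bound $0<O\le 1$ and a Cauchy--Schwarz estimate of $E[({\cal E}(-a\cdot D)(\tau))^{2}]$ uniformly over stopping times, which is the stopped-version form of your $L^{2}$-estimate. Your caveat about carrying the estimate along a localizing sequence is precisely the point the paper handles with the class-(D) argument, so there is no gap.
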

\begin{proof}
First, we show that $a\in L(D)$. In fact, it follows from  the
condition {\bf C1}, \eq{uniqso}, and \eq{dsde} that
$\|Y(t^{-}))\|\geq\min\{y_{i0}e^{-\lambda_{i}T},i=1,...,h\}>0$ for
any $t\in[0,T]$. Then, for $m,n\in\{1,...,d\}$, we have
\begin{eqnarray}
\bar{\rho}(Y(t^{-}))&\equiv&\sum_{m=1}^{d}\left(B(Y(t^{-}))'
\left(\sigma(Y(t^{-}))\sigma(Y(t^{-}))'\right)^{-1}\right)^{2}_{m}
\sum_{n=1}^{d}\sigma_{mn}^{2}(Y(t^{-})) \elabel{barrho}\\
&\leq&C_{\bar{\rho}}
+\frac{B_{b}^{2}B_{\sigma}}{b_{\sigma}^{2}}\|Y(t^{-})\|,
\nonumber
\end{eqnarray}
where $C_{\bar{\rho}}$ is some positive constant. Thus, it follows
from the Kunita-Watanable inequality (e.g., Theorem 25 in page 69 of
Protter~\cite{pro:stoint}) that
\begin{eqnarray}
E\left[\sum_{m=1}^{d}\sum_{n=1}^{d}\left|
\int_{0}^{T}a_{m}(t)a_{n}(t)d\left[M_{m}^{D},M_{n}^{D}\right](t)
\right|\right]&\leq&
d^{2}E\left[\int_{0}^{T}\bar{\rho}(Y(t^{-}))dt\right]
\elabel{mnmmI}\\
&<&\infty,\nonumber
\end{eqnarray}
where $a_{m}$ and $M^{D}_{m}$ with $m\in\{1,...,d\}$ are the $m$th
components of $a$ and $M^{D}$ respectively.
Furthermore, it follows from \eq{dsde} that
\begin{eqnarray}
&&E\left[\sum_{m=1}^{d}\int_{0}^{T}a_{m}(t)D_{m}(t)
B_{m}(Y(t^{-}))dt\right]=E\left[\int_{0}^{T}\rho(Y(t^{-}))dt\right]
<\infty. \elabel{mnmmII}
\end{eqnarray}
Then, by \eq{mnmmI}-\eq{mnmmII}, Definition 6.17 of page 207,
Definition 4.3 of page 180, Definition 6.12 of page 206, and
Definition 2.6 of page 76 in Jacod and
Shiryaev~\cite{jacshi:limthe}, we know that $a\in L(D)$. Thus,
$(a\cdot D)(T)$ is well defined. 

In addition, it follows from Theorem 4.5(a) in page 180 of Jacod and
Shiryaev~\cite{jacshi:limthe} that, for each $u\in L(D)$, we have,
\begin{eqnarray}
&&(u\cdot D)(t)=\lim_{k\rightarrow\infty}
\sum_{i=1}^{d}\int_{0}^{t}u_{i}(s)I_{\{\|u(s)\|\leq
k\}}dM^{D}_{i}(s) +\sum_{i=1}^{d}\int_{0}^{t}u_{i}(s)dB^{D}_{i}(s),
\elabel{stointud}
\end{eqnarray}
where the limit in the first term on the right-hand side of
\eq{stointud} corresponds to the convergence in probability
uniformly on every compact set of $[0,T]$. Therefore, by \eq{dsde},
\eq{expintcon}, \eq{dsdeb}-\eq{mdreps}, \eq{stointud}, and the
Lebesgue dominated convergence theorem, we know that
\begin{eqnarray}
(a\cdot D)(T)=\sum_{m=1}^{d}\int_{0}^{T}a_{m}(t)dD_{m}(t).
\elabel{explicitad}
\end{eqnarray}

Now, it follows from Lemma~\ref{opadp} that $O$ is a semimartingale.
Thus, it follows from Conditions {\bf C1}, {\bf C2} and
\eq{explicitad} that $(a\cdot D)$ is also a semimartingale. Then, by
Corollary 8.7(b) and equation 8.19 in pages 135-138 of Jacod and
Shiryaev~\cite{jacshi:limthe}, we have that
\begin{eqnarray}
\hat{Z}(t)&=&{\cal E}(K-(a\cdot D)-[K,(a\cdot D)])(t)
\elabel{jhatz}\\
&=&{\cal E}\left(M^{K}-(a\cdot M^{D})+(b^{K}-a'b^{D})\cdot
A\right)(t)
\nonumber\\
&=&{\cal E}(G)(t),\nonumber
\end{eqnarray}
where the second equality follows from the facts that $A(t)=t$,
$K(0)=0$ and the independence among driving Brownian motions and
L\'evy processes. The third equality follows from
Lemma~\ref{opadpI}. Furthermore, $M^{K}$ and $M^{D}$ are given by
\eq{mkreps} and \eq{mdreps}, which are $\{{\cal
F}_{t}\}$-martingales. Hence,
\begin{eqnarray}
&&G\equiv M^{K}-\left(a\cdot M^{D}\right)\elabel{grepsn}
\end{eqnarray}
is also an $\{{\cal F}_{t}\}$-martingale. Thus, it follows from
Theorem 4.61 in page 59 of Jacod and Shiryaev~\cite{jacshi:limthe}
that $\hat{Z}$ is an $\{{\cal F}_{t}\}$-local martingale.

Second, we prove that $\hat{Z}$ is of class (D), i.e., the set of
random variables
\begin{eqnarray}
&&\{\hat{Z}(\tau),\tau\;\mbox{is finite valued}\;\{{\cal
F}_{t}\}-\mbox{stopping times}\} \nonumber
\end{eqnarray} is
uniformly integrable (e.g., Definition 1.46 in page 11 of Jacod and
Shiryaev~\cite{jacshi:limthe}).

In fact, consider an arbitrary finite-valued $\{{\cal
F}_{t}\}$-stopping time $\tau\leq T$ and an arbitrary constant
$\gamma>0$. Then, we have
\begin{eqnarray}
&&E\left[\left|\hat{Z}(\tau)\right|
I_{\{|\hat{Z}(\tau)|\geq\gamma\}}\right]
\leq\frac{1}{P(0,y_{0})}\left(E\left[\left({\cal E}(-a\cdot
D)(\tau)\right)^{2}\right]\right)^{1/2}
\left(P\{|\hat{Z}(\tau)|\geq\gamma\}\right)^{1/2},\elabel{classd}
\end{eqnarray}
where we have used the facts that $0<O(\cdot)\leq 1$ and $D$ is
continuous. Furthermore, let
\begin{eqnarray}
&&U_{1}(t)=\int_{0}^{t}\rho(Y(s^{-}))ds, \elabel{mmi}\\
&&U_{2}(t)=\sum_{n=1}^{d}\int_{0}^{t}\bar{B}_{n}(Y(s^{-}))dW_{n}(s),
\elabel{mmii}
\end{eqnarray}
where $\bar{B}(Y(s^{-}))$ is defined in \eq{barbii}. Hence,
$U_{2}(t)$ is a continuous $\{{\cal F}_{t}\}$-martingale. Thus,
\begin{eqnarray}
E\left[\left({\cal E}\left(-a\cdot D\right)(\tau)\right)^{2}\right]
&=& E\left[e^{\left(-2(U_{1}(\tau)+U_{2}(\tau))
-[U_{1}+U_{2},U_{1}+U_{2}](\tau) \right)}\right]\elabel{caletau}\\
&\leq&E\left[e^{-2U_{2}(\tau)}\right]
\nonumber\\
&\leq&\left(E\left[e^{8[U_{2},U_{2}](T)}\right]\right)^{\frac{1}{2}}
\nonumber\\
&<&\infty,\nonumber
\end{eqnarray}
where the third inequality follows from the optional sampling
theorem, the fact that $e^{-2U_{2}(t)}$ is a submartingale by the
Jensen's inequality, and Theorem 39 in page 138 of
Protter~\cite{pro:stoint}. The last inequality follows from
conditions {\bf C1}-{\bf C2}. Therefore, it follows from
\eq{caletau} that
$\sup_{\tau}E\left[\left|\hat{Z}(\tau)\right|\right]\leq K_{1}$,
where $K_{1}$ is some positive constant. Thus, by the Markov's
inequality, we have that
\begin{eqnarray}
P\{|\hat{Z}(\tau)|\geq\gamma\}\leq\frac{K_{1}}{\gamma}\rightarrow
0\;\;\mbox{as}\;\gamma\rightarrow\infty\elabel{mhatz}
\end{eqnarray}
for all stopping time $\tau\leq T$. Therefore, it follows from
\eq{classd}-\eq{mhatz} that $\hat{Z}$ is of class (D). Hence, it
follows from \eq{jhatz} and Proposition 1.47(c) in page 12 of Jacod
and Shiryaev~\cite{jacshi:limthe} that $\hat{Z}$ is a uniformly
integrable $\{{\cal F}_{t}\}$- and $P$-martingale. $\Box$
\end{proof}

\begin{lemma}\label{emmp}
Under conditions {\bf C1}, {\bf C2}, and \eq{expintcon}, $Q^{*}$ is
an equivalent martingale measure.
\end{lemma}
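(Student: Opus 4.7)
The plan is to establish three properties of $Q^{*}$: that it is a probability measure, that it is equivalent to $P$, and that $D$ is a $Q^{*}$-local martingale. The first is immediate from Lemma~\ref{zfpp}, which shows $\hat{Z}$ is a true $P$-martingale with $\hat{Z}(0)=1$, so $E[\hat{Z}(T)]=1$ and $Q^{*}(\Omega)=1$. For the equivalence $Q^{*}\sim P$, since $dQ^{*}=\hat{Z}(T)dP$ already gives $Q^{*}\ll P$, it suffices to verify $\hat{Z}(T)>0$ almost surely. Lemma~\ref{pintdif} (via the terminal condition in \eq{rhointdifeq}) gives $O(T)=P(T,Y(T))=1$, while Proposition~\ref{disprice} ensures $D$ is continuous, so ${\cal E}(-a\cdot D)(T)=\exp\{-(a\cdot D)(T)-\tfrac{1}{2}[a\cdot D,a\cdot D](T)\}>0$; combined with $O_{0}=P(0,y_{0})>0$ from \eq{pexactsolution}, this yields $\hat{Z}(T)>0$ a.s.

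The main step is to verify the $Q^{*}$-local martingale property of $D$. My approach is to show that $\hat{Z}D_{j}$ is a $P$-local martingale for each $j\in\{1,\dots,d\}$; since $\hat{Z}$ is a genuine (not merely local) $P$-martingale by Lemma~\ref{zfpp}, a standard Bayes-rule argument then forces $D_{j}$ to be a $Q^{*}$-local martingale. Using the representation $\hat{Z}={\cal E}(G)$ with $G=M^{K}-(a\cdot M^{D})$ established in \eq{jhatz}--\eq{grepsn} and applying integration by parts, the predictable finite-variation part of $\hat{Z}D_{j}$ reduces to $\hat{Z}_{-}(dB^{D}_{j}+d\langle G,M^{D}_{j}\rangle)$. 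Now $M^{K}$ is a pure-jump martingale driven by the compensated Poisson random measures $\tilde{N}_{i}$, while $M^{D}_{j}$ is a continuous Brownian integral and the Brownian motion $W$ is assumed independent of the subordinators $L_{i}(\lambda_{i}\cdot)$, so $\langle M^{K},M^{D}_{j}\rangle\equiv 0$. Consequently $d\langle G,M^{D}_{j}\rangle=-\sum_{i}a_{i}\,d\langle M^{D}_{i},M^{D}_{j}\rangle=-(c^{D}a)_{j}\,dt$, and Lemma~\ref{opadpI} supplies the identity $c^{D}a=b^{D}$, so the drift correction cancels $dB^{D}_{j}$ exactly.

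The key structural fact is the orthogonality $\langle M^{K},M^{D}_{j}\rangle=0$, which is precisely what permits the single adjustment $a=(c^{D})^{-1}b^{D}$ to neutralize the drift of every component of $D$ simultaneously; without the independence of $W$ and $L$ one would need a richer density with compensating jump terms in the stock dynamics as well. A secondary technical point is that translating the $P$-local-martingale property of $\hat{Z}D_{j}$ into the $Q^{*}$-local-martingale property of $D_{j}$ genuinely requires $\hat{Z}$ to be a true $P$-martingale, which is exactly the content of Lemma~\ref{zfpp}. Thus the main obstacle, namely establishing the uniform integrability and strict positivity of $\hat{Z}$, has already been overcome in the preceding lemmas, and the present proof collapses to the two short cancellations sketched above once those inputs are in place.
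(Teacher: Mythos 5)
Your proposal is correct in what it establishes and takes a genuinely different, far more economical route than the paper, but it proves a strictly weaker conclusion than the paper's proof does, and that difference is exactly where the bulk of the paper's work lives.

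What you do differently: you stay under $P$ and show by integration by parts that $\hat{Z}D_{j}$ has vanishing drift --- the orthogonality $[M^{K},M^{D}_{j}]=0$ (purely discontinuous jump martingale against a continuous Brownian integral) kills the jump contribution, and the identity $c^{D}a=b^{D}$ from Lemma~\ref{opadpI} cancels $\hat{Z}_{-}\,dB^{D}_{j}$ against $-\hat{Z}_{-}(c^{D}a)_{j}\,dt$ --- and then transfer the local martingale property to $D_{j}$ under $Q^{*}$ via the Bayes-rule correspondence, which is legitimate because Lemma~\ref{zfpp} makes $\hat{Z}$ a true $P$-martingale. The paper instead applies the Girsanov--Meyer theorem to compute the $Q^{*}$-decomposition $D=\tilde{D}+\bar{s}$ explicitly in \eq{tilded}--\eq{ddequal}, and then devotes most of the proof to moment estimates (\eq{ezmdm}, \eq{mkls}, \eq{loneintpe}, \eq{oneoverhatz}, \eq{decsec}) showing that each resulting piece is a \emph{true} martingale. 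Your computation is the cleaner way to see why the single adjustment process $a$ neutralizes the drift of every component simultaneously, and it already suffices for the conclusion needed downstream, namely $Q^{*}\in{\cal U}_{2}^{e}(D)$, since the set in \eq{equivmeasure} only requires $D$ to be a $Q^{*}$-\emph{local} martingale.

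The gap: the lemma, as the paper proves it, asserts that $D$ is a genuine $Q^{*}$-martingale, and your argument stops at the local martingale property. Drift cancellation alone cannot give more --- a positive continuous local martingale with zero drift may still be a strict supermartingale --- so the upgrade from local to true martingale is not a formality. Closing it requires precisely the integrability control the paper supplies: e.g.\ showing that $\hat{Z}D_{j}$ is of class (D) under $P$, or the higher-moment bounds on ${\cal E}(-a\cdot D)$, $D_{m}$, and the jump integrands as in \eq{ezmdm} and \eq{decsec}. A second, smaller omission is that the paper's proof of this lemma also records $dQ^{*}/dP\in L^{2}(P)$ at its end (needed for membership in ${\cal U}_{2}^{e}(D)$), which your sketch does not address. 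So: correct and more elegant as a proof that $Q^{*}$ is an equivalent \emph{local} martingale measure, but not yet a proof of the lemma in the strength the paper gives it.
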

\begin{proof}
First, we use $P_{t}$ to denote the restriction of $P$ to ${\cal
F}_{t}$ for each $t\in[0,T]$. Then, we define
$dQ^{*}_{t}\equiv\hat{Z}(t)dP_{t}$ and $dQ^{*}\equiv\hat{Z}(T)dP$.
Owing to \eq{pexactsolution}-\eq{densitym}, we know that
$\hat{Z}(t)>0$ for each $t\in[0,T]$.
Furthermore, note that $\hat{Z}$ is a $\{{\cal F}_{t}\}$- and
$P$-martingale. Hence, it follows from the discussion in page 166 of
Jacod and Shiryaev~\cite{jacshi:limthe} that $Q^{*}$ is equivalent
to $P$ with the density process $\hat{Z}$.

Next, we show that $D$ is an $Q^{*}$-martingale. In fact, since $D$
is an $P$-semimartingale with the decomposition given in \eq{dsde},
it follows from Girsanov-Meyer Theorem (e.g., Theorem 35 in page 132
of Protter~\cite{pro:stoint}) that $D$ is also an
$Q^{*}$-semimartingale with the decomposition $D=\tilde{D}+\bar{D}$.
The process $\bar{D}$ is an $Q^{*}$-finite variation process. For
each $m\in\{1,...,d\}$,
\begin{eqnarray}
\tilde{D}_{m}(t)&=&M^{D}_{m}(t)-\int_{0}^{t}\frac{1}{\hat{Z}(s)}
d\left[\hat{Z},M^{D}_{m}\right](s) \elabel{tilded}\\
&=&M^{D}_{m}(t)-\sum_{n=1}^{d}\int_{0}^{t}
\frac{D_{m}(s)\sigma_{mn}(Y(s^{-}))}{\hat{Z}(s)O_{0}} d\left[O{\cal
E}(-a\cdot D),W_{n}\right]^{c}(s) \nonumber\\
&=&M^{D}_{m}(t)-\sum_{n=1}^{d}\int_{0}^{t}
\frac{D_{m}(s)\sigma_{mn}(Y(s^{-}))}{\hat{Z}(s)O_{0}}{\cal
E}(-a\cdot D(s))\left(d\left[O,W_{n}\right]^{c}(s)\right.\nonumber\\
&&\left.+O(s)d\left[U,W_{n}\right]^{c}(s)
+\frac{1}{2}d[[O,U]^{c},W_{n}]^{c}(s)\right)\nonumber\\
&=&M^{D}_{m}(t)-\sum_{n=1}^{d}\int_{0}^{t}
D_{m}(s)\sigma_{mn}(Y(s^{-}))d\left[U,W_{n}\right]^{c}(s)
\nonumber\\
&=&M^{D}_{m}(t)+\sum_{r=1}^{d}\sum_{n=1}^{d}\int_{0}^{t}
D_{m}(s)\sigma_{mn}(Y(s^{-}))a_{r}(s)d\left[D_{r},W_{n}\right]^{c}(s)
\nonumber\\
&=&M^{D}_{m}(t)+\int_{0}^{t}\sum_{n=1}^{d}
D_{m}(s)\sigma_{mn}(Y(s^{-}))\bar{B}_{n}(Y(s^{-}))ds,
\nonumber
\end{eqnarray}
where $\bar{B}_{n}(Y(s^{-}))$ is defined in \eq{barbiiI}. The second
equality in \eq{tilded} follows from Theorem 29 in page 75 of
Protter~\cite{pro:stoint}, the proof of Corollary in page 83 of
Protter~\cite{pro:stoint}, the fact that $W$ is continuous, Theorem
4.52 in page 55 of Jacod and Shiryaev~\cite{jacshi:limthe}, and the
explanation in page 70 of Protter~\cite{pro:stoint}.
The third equality in \eq{tilded} follows from the Ito's formula for
multi-dimensional semimartingales (e.g., Theorem 33 in pages 81-82
of Protter~\cite{pro:stoint}), and the associated function $f$ is
taken to be $f(O,U)=Oe^{U}$. Furthermore, $a_{r}$ is the $r$th
component of $a$, and $U$ is defined by $U(t)\equiv-a\cdot
D(t)-\frac{1}{2}[a\cdot D,a\cdot D](t)$. Thus, we have
\begin{eqnarray}
\bar{D}(t)=D(t)-\tilde{D}(t)=\bar{s}\equiv(s_{1},...,s_{d})'\;\;
\mbox{or}\;\;D(t)=\tilde{D}(t)+\bar{s}, \elabel{ddequal}
\end{eqnarray}
where $s_{i}$ for each $i\in\{1,...,d\}$ is the initial price as
given in \eq{stockassetm}.

Therefore, to show that $D$ is an $Q^{*}$-martingale, it suffices to
show that $\tilde{D}$ is an $Q^{*}$-martingale. More precisely, by
the last equation in the proof of Theorem 35 in pages 132-133 of
Protter~\cite{pro:stoint}, we have that
\begin{eqnarray}
&&\tilde{D}_{m}(t)=\left(M^{D}_{m}(t)-\frac{1}{\hat{Z}(t)}
\left[\hat{Z},M_{m}^{D}\right](t)\right)+\int_{0}^{t}
\left[\hat{Z},M_{m}^{D}\right](s^{-})
d\left[\frac{1}{\hat{Z}}\right](s).\elabel{gmtdec}
\end{eqnarray}
Then, we can show that the both terms on the right-hand side of
\eq{gmtdec} are $Q^{*}$-martingales.

For the first term on the right-hand side of \eq{gmtdec}, it follows
from integration by parts (e.g., equations (*) and (**) in page 132
of Protter~\cite{pro:stoint}), the Ito's formula (e.g., Theorem 1.14
and Theorem 1.16 in pages 6-9 of $\emptyset$ksendal and
Sulem~\cite{okssul:appsto}), and Lemma~\ref{pintdif} that
\begin{eqnarray}
&&\left(M^{D}_{m}(t)-\frac{1}{\hat{Z}(t)}
\left[\hat{Z},M_{m}^{D}\right](t)\right)\hat{Z}(t) \elabel{pzmart}\\
&=&\int_{0}^{t}\hat{Z}(s^{-})dM_{m}^{D}(s)
+\int_{0}^{t}M_{m}^{D}(s)d\hat{Z}(s) \nonumber\\
&=&\int_{0}^{t}\hat{Z}(s^{-})dM_{m}^{D}(s)-\sum_{n=1}^{d}
\int_{0}^{t}M_{m}^{D}(s)\hat{Z}(s^{-})\bar{B}_{n}(Y(s^{-}))
dW_{n}(s)\nonumber\\
&&+\sum_{i=1}^{h}\int_{0}^{t}\int_{z_{i}>0} \frac{M_{m}^{D}(s){\cal
E}((-a\cdot D)(s))}{O_{0}}
(P(s,Y(s^{-})+z_{i}e_{i})-P(s,Y(s^{-})))\tilde{N}_{i}(\lambda_{i}ds,dz_{i}),
\nonumber
\end{eqnarray}
where $\bar{B}_{n}(Y(s^{-}))$ is defined in \eq{barbiiI}. The second
equality follows from \eq{mkreps}-\eq{mdreps} and the fact that
\begin{eqnarray}
d\hat{Z}(t)=\hat{Z}(t^{-})dG(t) \elabel{stoexp}
\end{eqnarray}
owing to \eq{jhatz}-\eq{grepsn}, the definition of Dol\'eans-Dade
exponential, and Theorem 37 in pages 84-85 of
Protter~\cite{pro:stoint}.

Then, we can show that each of the three terms on the right-hand
side of \eq{pzmart} is an $Q^{*}$-martingale.

The claim that the first term on the right-hand side of \eq{pzmart}
is a $Q^{*}$-martingale can be proved as follows. First, it follows
from the similar argument as used in \eq{mkls} that $M^{D}$ is a
square integrable $P$-martingale. Second, by the Tonelli's Theorem
(e.g., Theorem 20 in page 309 of Royden \cite{roy:reaana}) and the
H$\ddot{o}$lder's inequality, we have
\begin{eqnarray}
&&E\left[\int_{0}^{T}\hat{Z}^{2}(s)d\left[M^{D}_{m},M^{D}_{m}\right](s)
\right]\elabel{ezmdm}\\
&\leq&\bar{K}\int_{0}^{T}\left(E\left[O^{8}(s)\right]\right)^{1/2}
\left(E\left[\left({\cal E}(-a\cdot
D)(s)\right)^{16}\right]\right)^{1/4}\left(E\left[D_{m}^{16}(s)\right]
\right)^{1/4}ds\nonumber\\
&<&\infty,\nonumber
\end{eqnarray}
where $\bar{K}$ is some positive constant. The last inequality in
\eq{ezmdm} follows from the similar arguments as in \eq{caletau} and
\eq{dmeight}. Thus, it follows from Theorem 4.40(b) in page 48 of
Jacod and Shiryaev~\cite{jacshi:limthe} that the first term on the
right-hand side of \eq{pzmart} is an $\{{\cal F}_{t}\}$- and
$P$-martingale.

The claim that the second term on the right-hand side of \eq{pzmart}
is an $Q^{*}$-martingale can be proved as follows. It follows from
\eq{dmeight} and Exercise 3.25 in page 163 of Karatzas and
Shreve~\cite{karshr:bromot} that
\begin{eqnarray}
&&E\left[\int_{0}^{t}\left(M_{m}^{D}(s)\right)^{16}ds\right]<\infty.
\elabel{mkls}
\end{eqnarray}
Then, by \eq{mkls}, the H$\ddot{o}$lder's inequality and the similar
method as used in \eq{ezmdm}, we know that the second term on the
right-hand side of \eq{pzmart} is an $\{{\cal F}_{t}\}$- and
$P$-martingale.

The claim that the third term on the right-hand side of \eq{pzmart}
is an $Q^{*}$-martingale can be proved as follows. It follows from
the Tonelli's Theorem (e.g., Theorem 20 in page 309 of Royden
\cite{roy:reaana}) that
\begin{eqnarray}
&&E\left[\int_{0}^{T}\int_{z_{i}>0}\frac{|M_{m}^{D}(t)|}{O_{0}}
\left|(P(t,Y(t^{-})+z_{i}e_{i})-P(t,Y(t^{-}))){\cal E}((-a\cdot
D)(t))\right|\nu_{i}(dz_{i})dt\right]
\elabel{loneintpe}\\
&\leq&K_{1}\left(E\left[\int_{0}^{T}\int_{z_{i}>0}
\sup_{\xi(Y(t^{-}))\in[0,z_{i}]}\left|\frac{\partial P(t,Y(t^{-})
+\xi(Y(t^{-}))e_{i})}{\partial
y_{i}}\right|^{2}z_{i}\nu_{i}(dz_{i})dt\right]\right)^{\frac{1}{2}}
\nonumber\\
&<&\infty,\nonumber
\end{eqnarray}
where $K_{1}$ is some positive constant.
The inequalities in \eq{loneintpe} follow from the similar proofs as
used in \eq{caletau}, \eq{mkls}, the H$\ddot{o}$lder's inequality,
the proof of \eq{provesqu}, and the fact that
\begin{eqnarray}
\int_{z_{i}>0}z_{i}\nu(dz_{i})
&\leq&\int_{0<z_{i}<1}z_{i}\nu_{i}(dz_{i})+
\int_{z_{i}\geq 1}z_{i}\nu_{i}(dz_{i})<\infty.\nonumber\\
&=&\int_{0<z_{i}<1}z_{i}\nu_{i}(dz_{i})+\int_{z_{i}\geq
1}\left(e^{z_{i}}-1\right)+\int_{z_{i}\geq
1}\nu_{i}(dz_{i})\nonumber\\
&<&\infty.\nonumber
\end{eqnarray}
Then, it follows from \eq{loneintpe} and the argument in pages 61-62
in Ikeda and Watanable~\cite{ikewat:stodif} that the third term on
the right-hand side of \eq{pzmart} is also an $\{{\cal F}_{t}\}$-
and $P$-martingale.

Therefore, by summarizing the discussions for the three terms on the
right-hand side of \eq{pzmart}, we know that the process given by
\eq{pzmart}, is an $\{{\cal F}_{t}\}$- and $P$-martingale. Moreover,
by applying Proposition 3.8(a) in page 168 of Jacod and
Shiryaev~\cite{jacshi:limthe}, we can conclude that the first term
on the right-hand side of \eq{gmtdec} is an $Q^{*}$-martingale.

For the second term on the right-hand side of \eq{gmtdec}, we can
show that it is also an $\{{\cal F}_{t}\}$- and $Q^{*}$-martingale.
In fact, since $\hat{Z}$ is a density process of $Q^{*}$ in terms of
$P$ and $\left(\frac{1}{\hat{Z}}\right)\hat{Z}=1$ (that is an
$P$-martingale), it follows from Proposition 3.8(a) in page 168 of
Jacod and Shiryaev~\cite{jacshi:limthe} that $\frac{1}{\hat{Z}}$ is
an $Q^{*}$-martingale. Furthermore, it follows from the Ito's
formula (e.g., Theorem 32 in page 78 of Protter~\cite{pro:stoint}),
\eq{stoexp} and the calculation of $d\hat{Z}(t)$ in the last
equality in \eq{pzmart} that
\begin{eqnarray}
d\left(\frac{1}{\hat{Z}(t)}\right)
&=&\frac{1}{\hat{Z}(t^{-})}
\sum_{n=1}^{d}\left(\bar{B}_{n}(Y(t^{-}))\right)^{2}dt
-\frac{1}{\hat{Z}(t^{-})}
\sum_{n=1}^{d}\bar{B}_{n}(Y(t^{-}))dW_{n}(t)
\elabel{reversez}\\
&&-\sum_{i=1}^{h}\int_{z_{i}>0}\frac{F(t,z_{i})}{\hat{Z}(t)}
\tilde{N}_{i}(\lambda_{i}dz_{i},dt), \nonumber
\end{eqnarray}
where $\bar{B}(Y(t^{-}))$ is defined in \eq{barbiiI}. Thus, it
follows from \eq{reversez} that $\frac{1}{\hat{Z}}$ is squarely
integrable under $Q^{*}$, i.e.,
\begin{eqnarray}
E_{Q^{*}}\left[\left(\frac{1}{\hat{Z}(t)}\right)^{2}\right]
&\leq&E_{Q^{*}}\left[\sup_{0\leq s\leq
T}\frac{1}{\hat{Z}^{2}(s)}\right]
\elabel{oneoverhatz}\\
&\leq&4E_{Q^{*}}\left[\frac{1}{\hat{Z}^{2}(T)}\right]\nonumber\\
&\leq&4\left(E\left[\hat{Z}^{2}(T)\right]\right)^{\frac{1}{2}}
\left(E\left[\frac{1}{\hat{Z}^{4}(T)}\right]\right)^{\frac{1}{2}}
\nonumber\\
&=&\frac{4}{O_{0}}\left(E\left[({\cal E}((-a\cdot D)(T)))^{2}\right]
\right)^{\frac{1}{2}}\left(E\left[\frac{1}{\left({\cal E}((-a\cdot
D)(T)) \right)^{4}}\right]\right)^{\frac{1}{2}}
\nonumber\\
&<&\infty, \nonumber
\end{eqnarray}
where the second inequality in \eq{oneoverhatz} follows from the
Doob's martingale inequality (e.g., Theorem 2.1.5 in page 74 of
Applebaum~\cite{app:levpro}) since $\frac{1}{\hat{Z}}$ is an
$Q^{*}$-martingale. The last inequality of \eq{oneoverhatz} follows
from the similar argument as in \eq{caletau}.

Therefore, to show that the second term on the right-hand side of
\eq{gmtdec} is an $Q^{*}$-martingale, it suffices to show that the
following expectation under $Q^{*}$ is finite owing to
\eq{oneoverhatz} and Theorem 4.40(b) in page 48 of Jacod and
Shiryaev~\cite{jacshi:limthe},
\begin{eqnarray}
&&E_{Q^{*}}\left[\int_{0}^{T}\left(\left[\hat{Z},M_{m}^{D}\right](s^{-})
\right)^{2}d\left[\frac{1}{\hat{Z}},\frac{1}{\hat{Z}}\right](s)\right]
\elabel{decsec}\\
&=&E_{Q^{*}}\left[\int_{0}^{T}
\left(\left[\hat{Z},M_{m}^{D}\right]^{c}(s^{-})
\frac{1}{\hat{Z}(s^{-})}\right)^{2}
\sum_{n=1}^{d}\left(\bar{B}_{n}(Y(s^{-}))\right)^{2}ds\right]
\nonumber\\
&&+\sum_{i=1}^{h}E_{Q^{*}}\left[\int_{0}^{T}\int_{z_{i}>0}
\left(\left[\hat{Z},M_{m}^{D}\right]^{c}(s^{-})
\frac{F(s,z_{i})}{\hat{Z}(s)}\right)^{2}
\lambda_{i}\nu_{i}(dz_{i})ds\right].\nonumber
\end{eqnarray}
The first term on the right-hand side of \eq{decsec} is finite since
\begin{eqnarray}
&&E_{Q^{*}}\left[\int_{0}^{T}\frac{1}{\hat{Z}^{2}(s^{-})}
\left(\left[\hat{Z},M_{m}^{D}\right]^{c}(s^{-})\right)^{2}
\bar{\rho}(Y(s^{-}))ds\right]
\elabel{ldecsec}\\
&\leq&K_{1}\left(\frac{4}{3}E\left[\left(\frac{1}{{\cal E}(-a\cdot
D)(T)}\right)^{3}\right]\right)^{\frac{1}{2}}
\left(\frac{20}{19}E\left[\left({\cal E}(-a\cdot
D)(T)\right)^{20}\right]\right)^{\frac{1}{8}}
\left(\int_{0}^{T}E\left[D^{8}_{m}(s)\right]ds\right)^{\frac{1}{8}}
\nonumber\\
&<&\infty,\nonumber
\end{eqnarray}
where $K_{1}$ is some positive constant.
The first inequality in \eq{ldecsec} follows from the Doob's
martingale inequality (e.g., page 74 of
Applebaum~\cite{app:levpro}). The second inequality in \eq{ldecsec}
follows from the similar arguments as in \eq{classd} and
\eq{dmeight}. Similarly, the second term on the right-hand side of
\eq{decsec} is also finite, which can be proved along the line of
the discussion as in \eq{ldecsec}.

Thus, it follows from the finiteness of \eq{decsec} that the second
term on the right-hand side of \eq{gmtdec} is an $Q^{*}$-martingale.
Therefore, by combining this fact with \eq{gmtdec} and \eq{pzmart},
we know that $D=\tilde{D}+\bar{s}$ displayed in \eq{ddequal} is an
$Q^{*}$-martingale (i.e. $Q^{*}$ is an equivalent martingale
measure). Finally, by applying the similar discussion as used in
\eq{ezmdm}, we conclude that $\frac{dQ^{*}}{dP}\in L^{2}(P)$, which
implies that $Q^{*}\in{\cal U}_{2}^{e}(D)$. $\Box$
\end{proof}

\begin{lemma}
Under conditions {\bf C1}, {\bf C2}, and \eq{expintcon}, $Q^{*}$ is
the VOMM.
\end{lemma}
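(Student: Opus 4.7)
The plan is to verify the characterization of the variance-optimal martingale measure established in C\u{e}rn\'y and Kallsen~\cite{cerkal:meavar}. That characterization says, roughly, that a measure $Q \in {\cal P}_{D}(\bar{\Theta})$ with $dQ/dP \in L^{2}(P)$ coincides with the VOMM provided its density process admits a multiplicative factorization of the form $Z_{t} = (N_{t}/N_{0})\,{\cal E}(-a\cdot D)_{t}$, where $N$ is a strictly positive c\`adl\`ag process with $N_{T} = 1$ whose stochastic logarithm has finite-variation part equal to $(b^{D})'(c^{D})^{-1}b^{D}\cdot A$ (the \emph{opportunity process}), and where $a = (c^{D})^{-1}b^{D}$ is the \emph{adjustment process} from Lemma~3.7 of C\u{e}rn\'y--Kallsen~\cite{cerkal:strgen}. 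All the ingredients needed to verify this characterization have been assembled in the preceding lemmas.

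The verification proceeds by matching each ingredient to what has already been established. Take $N_{t}\equiv O_{t} = P(t,Y(t))$, with $P(t,y)$ from~\eq{pexactsolution}. By \Lemma{pintdif}, $P(T,y) = 1$ (so $O_{T}=1$), $P>0$, and $P\in C^{1,1}([0,T)\times R_{c}^{h},R^{1})$ so that It\^o's formula applies. By \Lemma{opadp}, $O$ is a strictly positive $\{{\cal F}_{t}\}$-semimartingale and its stochastic logarithm $K = {\cal L}(O)$ has the canonical decomposition~\eq{ksde} whose drift is $\rho(Y(t^{-}))\,dt$. \Lemma{opadpI} then provides the two key structural identities $b^{K} = (b^{D})'(c^{D})^{-1}b^{D}$ and $a = (c^{D})^{-1}b^{D}$ demanded by the characterization. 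Thus $O$ is the opportunity process and the $a$ defined in~\eq{adjustmentp} is the adjustment process, so $\hat{Z}$ in~\eq{densitym} has precisely the form required of the VOMM density.

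The integrability and admissibility hypotheses of the characterization have already been verified: \Lemma{zfpp} shows that $\hat{Z}$ is a strictly positive uniformly integrable $\{{\cal F}_{t}\}$- and $P$-martingale, while \Lemma{emmp} shows that $Q^{*}$ defined by $dQ^{*}/dP = \hat{Z}(T)$ is an equivalent martingale measure lying in ${\cal U}_{2}^{e}(D)\subset {\cal P}_{D}(\bar{\Theta})$. Combining these facts with the two structural identities above, the characterization of~\cite{cerkal:meavar} forces $Q^{*}$ to coincide with the VOMM.

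The main obstacle is purely a bookkeeping one: one has to confirm that the abstract integrability hypotheses underlying C\u{e}rn\'y--Kallsen's characterization are met in our multivariate, non-Gaussian OU-driven setting, in particular that the stochastic logarithm of $O$ has a well-defined canonical decomposition and that $\hat{Z}$ and $1/\hat{Z}$ possess enough moments for the projection arguments to go through. These facts follow from conditions {\bf C1}, {\bf C2}, the exponential integrability~\eq{expintcon} of the L\'evy measures, and the local square-integrability of $D$ from \Proposition{disprice}, so the argument reduces to invoking the characterization once the matching has been made explicit.
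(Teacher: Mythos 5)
Your overall strategy --- certify $O$ as the opportunity process and $a$ as the adjustment process, then invoke the structure result of C\u{e}rn\'y and Kallsen (Proposition 3.13 of \cite{cerkal:strgen}) --- is the same as the paper's. The gap lies in the certification step. The ``characterization'' you state (a strictly positive c\`adl\`ag $N$ with $N_{T}=1$ whose stochastic logarithm has drift $(b^{D})'(c^{D})^{-1}b^{D}\cdot A$, together with $Q^{*}\in{\cal U}_{2}^{e}(D)$) is not the hypothesis of the verification theorem, Theorem 3.25 of \cite{cerkal:strgen}. The opportunity process is defined through a dynamic optimization problem posed at \emph{every} stopping time, and the drift identities of Lemma~\ref{opadpI} only identify $O$ as a solution of the associated local (characteristic-level) equations; they do not by themselves exclude other candidates with the same drift, so the multiplicative form of $\hat{Z}$ plus membership in ${\cal U}_{2}^{e}(D)$ does not yet force $Q^{*}$ to be variance-optimal.

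What Theorem 3.25 additionally requires --- and what constitutes the actual content of the paper's proof of this lemma --- is that for every stopping time $\tau$ the process
$u^{\tau}(t)=a(t)I_{(\tau,T]}(t)\,{\cal E}\bigl((-aI_{(\tau,T]})\cdot D\bigr)(t^{-})$
belongs to $\bar{\Theta}(D)$ (checked via the three equivalent admissibility conditions of Theorem 2.1 in \cite{cerkal:meavar}, using that $D$ is continuous), and that $O\,{\cal E}\bigl((-aI_{(\tau,T]})\cdot D\bigr)$ is of class (D). Lemmas~\ref{zfpp} and~\ref{emmp} supply essentially only the $\tau=0$ instance of these conditions (uniform integrability of $\hat{Z}$ and $Q^{*}\in{\cal U}_{2}^{e}(D)$); the whole family indexed by $\tau$ must be verified, by moment estimates of the same type already used for $\hat{Z}$. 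Your closing paragraph gestures at ``abstract integrability hypotheses'' but never identifies these conditions, so as written the argument does not close.
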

\begin{proof}
It suffices to justify that all conditions stated in Theorem 3.25 of
C\u{e}rn\'y and Kallsen~\cite{cerkal:strgen} are satisfied. First of
all, for any stopping time $\tau$, we can show that
\begin{eqnarray}
&&u^{\tau}(t)\equiv a(t)I_{(\tau,T]}(t){\cal
E}\left((-aI_{(\tau,T])})\cdot D\right)(t^{-})\in\bar{\Theta}(D).
\elabel{utaud}
\end{eqnarray}
In fact, it follows from the proof of Lemma~\ref{emmp} that ${\cal
U}_{2}^{e}(D)$ is nonempty. Furthermore, since $D$ is a continuous
$P$-semimartingale, it is sufficient to prove that the three
equivalent conditions stated in Theorem 2.1 of C\u{e}rn\'y and
Kallsen~\cite{cerkal:meavar} are satisfied for \eq{utaud}, which can
be done by tedious computations similarly as before. In addition, we
can show that $O{\cal E}((-aI_{(\tau,T]})\cdot D)$ is of class
($D$). Therefore, by combining this claim with Lemma~\ref{opadpI},
\eq{utaud}, and Theorem 3.25 in C\u{e}rn\'y and
Kallsen~\cite{cerkal:strgen}, we know that $O$ and $a$ are the
opportunity and adjustment processes in the sense defined Section 3
of~\cite{cerkal:strgen}. Thus, it follows from Proposition 3.13 in
C\u{e}rn\'y and Kallsen~\cite{cerkal:strgen} that $Q^{*}$ is the
VOMM. Hence, we complete the proof of Proposition~\ref{equimar}.
$\Box$
\end{proof}

\subsection{The Unique Existence of Solution to A Type of BSDEs}
Consider the following $q$-dimensional BSDE with jumps and a
terminal condition $H$
\begin{eqnarray}
V(t)&=&H-\int_{t}^{T}g\left(s,V(s^{-}),\bar{V}(s),\tilde{V}(s,\cdot),
Y(s^{-})\right)ds-\int_{t}^{T}\sum_{i=1}^{d}\bar{V}_{i}(s)dW_{i}(s)
\elabel{gbsde}\\
&&-\int_{t}^{T}\sum_{i=1}^{h}\int_{z_{i}>0}\tilde{V}_{i}(s,z_{i})
\tilde{N}_{i}(\lambda_{i}ds,dz_{i}), \nonumber
\end{eqnarray}
where $H\in L^{2}_{{\cal F}_{T}}(\Omega,R^{q},P)$,
$\bar{V}=\left(\bar{V}_{1},...,\bar{V}_{d}\right)\in R^{q\times d}$,
$\tilde{V}=\left(\tilde{V}_{1},...,\tilde{V}_{h}\right)\in
R^{q\times h}$, $g$ is a random function: $[0,T]\times R^{q}\times
R^{q\times d}\times L^{2}_{\nu}(R^{h}_{+},R^{q\times h})\times
R^{h}\times\Omega\rightarrow R^{h}$ and
\begin{eqnarray}
&&L^{2}_{\nu}(R^{h}_{+},R^{q\times h})\equiv\left\{\tilde{v}:
R^{h}_{+}\rightarrow R^{q\times h},\sum_{i=1}^{h}\int_{z_{i}>0}
\left|\tilde{v}_{i}(z_{i})\right|^{2}\nu_{i}(dz_{i})<\infty\right\}.
\elabel{lvrqh}
\end{eqnarray}
Furthermore, for any $\tilde{v}\in L^{2}_{\nu}(R^{h}_{+},R^{q\times
h})$, the associated norm is defined by
\begin{eqnarray}
\|\tilde{v}\|_{\nu}\equiv\left(\sum_{i=1}^{h}\int_{z_{i}>0}
\left|\tilde{v}_{i}(z_{i})\right|^{2}\lambda_{i}\nu_{i}(dz_{i})
\right)^{\frac{1}{2}}. \elabel{vnorm}
\end{eqnarray}
\begin{proposition}\label{bsdey}
Replacing $H\in L^{4}_{{\cal F}_{T}}(\Omega,R,P)$ by $H\in
L^{2}_{{\cal F}_{T}}(\Omega,R,P)$ in Assumption~\ref{hstopasump}.
Supposing that $g(t,v,\bar{v},\tilde{v},Y(t^{-}))$ is $\{{\cal
F}_{t}\}$-adapted for any given $(v,\bar{v},\tilde{v})\in
R^{q}\times R^{q\times d}\times L^{2}_{\nu}(R^{h}_{+},R^{q\times
h})$ with
\begin{eqnarray}
g(\cdot,0,0,,0,Y(\cdot^{-}))\in L^{2}_{{\cal
F}}\left([0,T],R^{q}\right)\elabel{blipic}
\end{eqnarray}
such that
\begin{eqnarray}
&&\left\|\left(g\left(t,v,\bar{v},\tilde{v},Y(t^{-})\right)-g\left(t,u,
\bar{u},\tilde{u},Y(t^{-})\right)\right)I_{\{t\leq\tau_{n}\}}\right\|
\elabel{blipschitz}\\
&\leq& K_{n}\left(\|u-v\|+\|\bar{u}-\bar{v}\|
+\|\tilde{u}-\tilde{v}\|_{\nu}\right)\nonumber
\end{eqnarray}
for any $(u,\bar{u},\tilde{u})$ and $(v,\bar{v},\tilde{v})\in
R^{q}\times R^{q\times d}\times L^{2}_{\nu}(R^{h}_{+},R^{q\times
h})$, where $K_{n}$ depending on $n$ are positive constants. Then,
the BSDE in \eq{gbsde} has a unique solution
\begin{eqnarray}
(V,\bar{V},\tilde{V})\in L^{2}_{{\cal F}}([0,T],R^{q},P)\times
L^{2}_{{\cal F},p}([0,T],R^{q\times d},P)\times
L^{2}_{p}([0,T],R^{q\times h},P), \elabel{buniso}
\end{eqnarray}
where $V$ is a c\`adl\`ag process. The uniqueness is in the sense:
if there exists another solution $(U,\bar{U},\tilde{U})$ as
required, then,
\begin{eqnarray}
E\left[\int_{0}^{T}\left(\|U(t)-V(t)\|^{2}+\|\bar{U}(t)
-\bar{V}(t)\|^{2}+\|\tilde{U}(t,\cdot)-\tilde{V}(t,\cdot)\|_{\nu}^{2}\right)dt
\right]=0. \elabel{uniquesbI}
\end{eqnarray}
\end{proposition}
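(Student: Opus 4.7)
The plan is to establish unique existence by a localization procedure keyed to the stopping times $\{\tau_n\}$ in Assumption~\ref{hstopasump}. First, for each fixed $n$ I would introduce the truncated generator
$g^n(t,v,\bar v,\tilde v,Y(t^-)) \equiv g(t,v,\bar v,\tilde v,Y(t^-))\, I_{\{t\le \tau_n\}}$
together with the terminal datum $H_{\tau_n}\in L^2_{{\cal F}_T}(\Omega,R^q,P)$. By \eq{blipic} we have $g^n(\cdot,0,0,0,Y(\cdot^-))\in L^2_{{\cal F}}([0,T],R^q)$, and by \eq{blipschitz} (since the indicator on the left only decreases the norm when removed on the right) the truncated generator $g^n$ is globally Lipschitz in $(v,\bar v,\tilde v)$ with constant $K_n$ uniformly in $t$. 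The classical existence and uniqueness theory for BSDEs driven jointly by Brownian motion and a compensated Poisson random measure under a globally Lipschitz generator (e.g., Tang--Li (1994) or its standard formulation for the jump setting as an extension of Pardoux--Peng) then produces a unique adapted strong solution $(V^n,\bar V^n,\tilde V^n)$ lying in the product space of \eq{buniso}, with $V^n$ c\`adl\`ag.

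Second, I would establish consistency by showing that on $A_n\equiv\{\omega:\tau_n(\omega)\ge T\}$ one has $V^m=V^n$, $\bar V^m=\bar V^n$ and $\tilde V^m=\tilde V^n$ for every $m\ge n$, in the appropriate almost-sure sense. Since $\{\tau_k\}$ is nondecreasing, $I_{\{s\le \tau_k\}}\equiv 1$ on $[0,T]$ for both $k=n$ and $k=m$ on $A_n$; moreover $H_{\tau_n}=H_{\tau_m}=H$ on $A_n$ by Assumption~\ref{hstopasump}. Hence, restricted to $A_n$, both triples satisfy the untruncated BSDE with terminal $H$. Applying It\^o's formula to $|V^n-V^m|^2 I_{A_n}$, invoking the Lipschitz bound \eq{blipschitz} (which is valid throughout $[0,T]$ on $A_n$) and a Gronwall argument, forces equality a.s.\ on $A_n$. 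Since $\tau_n\to\infty$ a.s., $P(\bigcup_n A_n)=1$, so defining $(V,\bar V,\tilde V)\equiv (V^n,\bar V^n,\tilde V^n)$ on $A_n$ yields well-defined limit processes a.s.

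Third, I would verify that the glued triple satisfies \eq{gbsde} and belongs to the space \eq{buniso}. On each $A_n$ the equation for $V^n$ reduces precisely to \eq{gbsde}, and the c\`adl\`ag and predictability properties are inherited from the $(V^n,\bar V^n,\tilde V^n)$. For uniqueness, given two solutions $(V,\bar V,\tilde V)$ and $(U,\bar U,\tilde U)$ of \eq{gbsde} in \eq{buniso}, I would apply It\^o to $|V-U|^2$ on $[t\wedge\tau_n,T\wedge\tau_n]$, use the local Lipschitz bound \eq{blipschitz} with constant $K_n$ on $\{s\le\tau_n\}$, apply Gronwall on the truncated interval, and then pass $n\to\infty$ via monotone convergence to obtain \eq{uniquesbI}.

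The hard part will be verifying the global $L^2_{{\cal F}}$-integrability required in \eq{buniso} for the glued processes. The standard a priori estimates for BSDEs with jumps produce bounds that grow like $e^{CK_n^2 T}$, so the $L^2$ norms of $(V^n,\bar V^n,\tilde V^n)$ cannot be passed to a uniform-in-$n$ limit directly, and one cannot argue by $L^2$-convergence of $V^n$ to $V$. The way around this is to avoid a direct $L^2$-limit altogether: on $A_n$ the glued triple equals $(V^n,\bar V^n,\tilde V^n)$, so integrability on $A_n$ is inherited from the truncated problem; the $L^2$-approximation $H_{\tau_n}\to H$ together with $P(A_n^c)\to 0$ controls the tail contribution, since $H_{\tau_n}=H$ on $A_n$ gives $E[|H|^2 I_{A_n}]\le E[|H|^2]<\infty$. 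Passing to the limit on the nested sequence $A_n\uparrow\Omega$ (a.s.) via monotone convergence then yields the required integrability for $V$, and a parallel argument on the martingale-representation components $\bar V$ and $\tilde V$ completes the proof.
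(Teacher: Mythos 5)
Your proposal is correct in outline and follows essentially the same route as the paper: localize via the stopping times $\tau_{n}$, solve the resulting globally Lipschitz BSDE with jumps for each truncated problem by citing the classical theory (Tang--Li and related references), glue the solutions on the increasing events $\{\tau_{n}\ge T\}$ using the consistency $H_{\tau_{n}}=H$ there, and pass to the limit while inheriting the $L^{2}$ bounds from each truncated problem. The only differences are mechanical --- you truncate the generator by $I_{\{t\le\tau_{n}\}}$ keeping terminal time $T$ whereas the paper poses the BSDE up to the random time $\sigma_{n}=T\wedge\tau_{n}$, and your consistency step runs an It\^o--Gronwall estimate weighted by $I_{A_{n}}$ (note $A_{n}\in{\cal F}_{T}$ is not adapted, so this step should be recast, as the paper does, through the uniqueness of the truncated problem) --- but the architecture and the treatment of the integrability issue coincide with the paper's.
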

\begin{proof}
First, for each $n\in\{1,2,...\}$, we define
\begin{eqnarray}
&&\tau_{n}\equiv\inf\{t>0,\|L(\lambda t)\|> n\}.
\elabel{stopsequence}
\end{eqnarray}
Then, it follows from Theorem 3 in page 4 of
Protter~\cite{pro:stoint} and condition \eq{expintcon} that
$\{\tau_{n}\}$ is a sequence of nondecreasing $\{{\cal
F}_{t}\}$-stopping times and satisfies $\tau_{n}\rightarrow\infty$
a.s. as $n\rightarrow\infty$ since
\begin{eqnarray}
P\{\tau_{n}\leq t\}=P\{\|L(\lambda t)\|>
n\}\leq\frac{E\left[\|L(\lambda t)\|^{2}\right]}{n^{2}}\rightarrow 0
\nonumber
\end{eqnarray}
as $n\rightarrow\infty$ for any given $t\in[0,\infty)$, where we
have used \eq{expintcon}, \eq{basecon}, \eq{usefuline}, and the fact
that $L(\lambda t)$ is a $h$-dimensional nonnegative and
nondecreasing c\`adl\`ag process.

Second, for each $n$, consider the following BSDE with a random
terminal time $\sigma_{n}\equiv T\wedge\tau_{n}$ and a terminal
condition $H_{\tau_{n}}$,
\begin{eqnarray}
V(t)&=&H_{\tau_{n}}-\int_{t\wedge\sigma_{n}}^{\sigma_{n}}
g\left(s,V(s^{-}),\bar{V}(s),\tilde{V}(s,\cdot),Y(s^{-})\right)ds
\elabel{taugbsde}\\
&&-\int_{t\wedge\sigma_{n}}^{\sigma_{n}}
\sum_{i=1}^{d}\bar{V}_{i}(s)dW_{i}(s)
-\int_{t\wedge\sigma_{n}}^{\sigma_{n}}
\sum_{i=1}^{h}\int_{z_{i}>0}\tilde{V}_{i}(s,z_{i})
\tilde{N}_{i}(\lambda_{i}ds,dz_{i}). \nonumber
\end{eqnarray}
Then, by slightly generalizing the discussion as in Yong and
Zhou~\cite{yonzho:stocon} and Tang and Li~\cite{tanli:neccon} (see
also El Karoui {\em et al.}~\cite{karpen:bacsto},
Situ~\cite{sit:solbac}, Yin and Mao~\cite{yinmao:adasol} for related
discussions), we know that $\eq{taugbsde}$ has a unique adapted
solution as required over $[0,\sigma_{n}]$.

Third, for each $n\in\{1,2,...\}$, let
$\Omega_{n}=\{\omega\in\Omega:\sigma_{n}(\omega)=T\}$. Since
$\sigma_{n}$ is a sequence of nondecreasing stopping times and
$\sigma_{n}\rightarrow T$ a.s. as $n\rightarrow\infty$, we have that
$\Omega=\cup_{n=1}^{\infty}\Omega_{n}$ and
$\Omega_{l}\subseteq\Omega_{n}$ whenever $l\leq n$. Now, we use
$\Pi^{n}(t,z)\equiv(V^{n}(t),\bar{V}^{n}(t),\tilde{V}^{n}(t,z))$ for
$t\leq\sigma_{n}$ and $z\in R^{h}_{+}$ to denote the unique solution
to \eq{taugbsde} for each $n$. Since
$H_{\tau_{n}}(\omega)=H(\omega)$ for all
$\omega\in\{\omega:\tau_{n}(\omega)\geq T\}$, we know that
$\Pi^{n}(t,z)=\Pi^{n-1}(t,z)=...=\Pi^{l}(t,z)$ for all
$t\leq\sigma_{l}(\omega)$, a.s. $\omega\in\Omega_{l}$ and any $z\in
R^{h}_{+}$. By the continuity of probability, we know that, for any
given $\epsilon>0$, there exists a sufficiently large $n_{0}>0$ such
that $P\{\Omega_{n}\}>1-\epsilon$ when $n>n_{0}$. Thus, for any
given $\delta>0$ and for all $n,l>n_{0}$, we have
\begin{eqnarray}
&&P\left\{\sup_{0\leq t\leq T,z\in
R^{h}_{+}}\left\|\Pi^{n}(t\wedge\sigma_{n},z)
-\Pi^{l}(t\wedge\sigma_{l},z)
\right\|>\delta\right\}<\epsilon,\nonumber
\end{eqnarray}
that is, $\{\Pi^{n}(\cdot\wedge\sigma_{n},\cdot),n\in\{1,2,...\}\}$
is uniformly Cauchy in probability. Thus, it is uniformly convergent
in probability to a process $\Pi=\{\Pi(t,z),t\in[0,T],z\in
R^{h}_{+}\}$. Therefore, we can extract a subsequence from
$\{\Pi^{n}(\cdot\wedge\sigma_{n},\cdot),n\in\{1,2,...\}\}$ such that
the convergence holds uniformly a.s. Hence, we can conclude that
$\Pi$ is a solution to \eq{gbsde} and have all the properties as
stated in the proposition. Furthermore, assume that
$\Pi'=\{\Pi'(t,z),t\in[0,T],z\in R^{h}_{+}\}$ is another solution to
\eq{gbsde}. Then, we can conclude that, for all $n\geq l$,
$\Pi'(t,z,\omega)=\Pi^{l}(t,z,\omega)$ for all $t\in[0,T]$, $z\in
R^{h}_{+}$, and almost all $\omega\in\Omega_{l}$. In fact, if the
claim fails to be true for some $n\geq l$, define
$\Pi^{''}_{n}(t,z,\omega)=\Pi'(t,z,\omega)$ for
$\omega\in\Omega_{l}$ and
$\Pi^{''}_{n}(t,z,\omega)=\Pi^{n}(t,z,\omega)$ for
$\omega\in\Omega_{l}^{c}$. Then, $\Pi''_{n}$ and $\Pi^{n}$ are
distinct solutions to \eq{taugbsde} with the same terminal condition
$H_{\tau_{n}}$, which contradicts the uniqueness of solution to
\eq{taugbsde}. Then, $P\{\Pi(t,z)=\Pi'(t,z)\;\mbox{for
all}\;t\in[0,T],z\in R^{h}_{+}\}=1$ follows from a straightforward
limiting argument as above. Furthermore, by applying the similar
argument as used for Definition 2.4 and its associated remark in
page 57 of Ikeda and watanabe~\cite{ikewat:stodif}, we know that
$\Pi$ is the unique solution to \eq{gbsde} (interested readers are
also referred to pages 309-310 of Applebaum~\cite{app:levpro} for
some related discussion). Hence, we complete the proof of
Proposition~\ref{bsdey}. $\Box$
\end{proof}

\subsection{Remaining Proof of Theorem~\ref{opthedge}}

First of all, by the H$\ddot{o}$lder's inequality and the similar
calculation as for \eq{caletau}, we have that
\begin{eqnarray}
E\left[\left(H{\cal E}(-a\cdot D)(T)\right)^{2}\right]
&\leq&\left(E\left[H^{4}\right]\right)^{\frac{1}{2}}
\left(E\left[\left({\cal E}(-a\cdot
D)(T)\right)^{4}\right]\right)^{\frac{1}{2}}
\elabel{hsquarein}\\
&<&\infty.\nonumber
\end{eqnarray}
Thus, it follows from the Jensen's inequality that the process
$X=\{X(t),t\in[0,T]\}$ with
\begin{eqnarray}
X(t)&\equiv&E\left[H{\cal E}(-a\cdot D)(T)|{\cal F}_{t}\right]
\elabel{klpbayes}
\end{eqnarray}
is a square-integrable martingale. Thus, by the Martingale
representation theorem (e.g., Lemma 2.3 in Tang and
Li~\cite{tanli:neccon}), we have
\begin{eqnarray}
X(t)=X(0)+\sum_{j=1}^{d}\int_{0}^{t}\bar{X}_{j}(s)dW_{j}(s)
+\sum_{i=1}^{h}\int_{0}^{t}\int_{z_{i}>0}\tilde{X}_{i}(s,z_{i})
\tilde{N}_{i}(\lambda_{i}ds,dz_{i}) \elabel{xhmarrep}
\end{eqnarray}
with $\bar{X}=(\bar{X}_{1},...,\bar{X}_{d})'\in L_{{\cal
F},p}^{2}([0,T],R^{d},P)$ and
$\tilde{X}=(\tilde{X}_{1},...,\tilde{X}_{h})'\in
L^{2}_{p}([0,T],R^{h},P)$.
Furthermore, it follows from the Bayes' rule (e.g., Lemma 8.6.2 in
page 160 of $\emptyset$ksendal~\cite{oks:stodif}) and
Proposition~\ref{equimar} that
\begin{eqnarray}
X(t)=O_{0}E\left[H\hat{Z}(T)|{\cal
F}_{t}\right]=O_{0}\hat{Z}(t)V(t), \elabel{pbayes}
\end{eqnarray}
where $V(t)$ is defined in \eq{defineV}. Thus, by the integration by
parts formula (e.g., Corollary 2 in page 68 of
Protter~\cite{pro:stoint}), and \eq{xhmarrep}-\eq{pbayes}, we have
\begin{eqnarray}
&&dV(t)\elabel{pbsdev}\\
&=&\frac{1}{O_{0}}\left(X(t^{-})d\left(\frac{1}{\hat{Z}(t)}\right)
+\frac{1}{\hat{Z}(t^{-})}dX(t)+d\left[X,\frac{1}{\hat{Z}}\right](t)\right)
\nonumber\\
&=&g(t,V(t^{-}),\bar{V}(t),\tilde{V}(t,\cdot),Y(t^{-})))dt\nonumber\\
&&+\sum_{i=1}^{d}\bar{V}_{i}(t)dW_{i}(t)+\sum_{i=1}^{h}\int_{z_{i}>0}
\tilde{V}_{i}(t,z_{i})\tilde{N}_{i}(\lambda_{i}dz_{i},dt),
\nonumber
\end{eqnarray}
where $g$ is defined in \eq{mgvvv} and
\begin{eqnarray}
&&\bar{V}_{i}(t)=-V(t^{-})\bar{B}_{i}(Y(t^{-}))
+\frac{\bar{X}_{i}(t)}{O_{0}\hat{Z}(t^{-})},\;\;\;\;\;\;\;\;\;\;\;\;
\mbox{for}\;i=1,...,d,
\nonumber\\
&&\tilde{V}_{i}(t,z_{i})=-V(t^{-})F(t,z_{i})\bar{Z}(t)
+\frac{\tilde{X}_{i}(t,z_{i})}{O_{0}\hat{Z}(t^{-})},\;\;\;\;\;
\mbox{for}\;i=1,...,h
\nonumber
\end{eqnarray}
with $\bar{Z}$ given by \eq{barbii}. Hence, by \eq{pbsdev}, we know
that $V$ satisfies the BSDE \eq{xbsde}. 

Next, we check that $g(t,v,\bar{v},\tilde{v},Y(t^{-}))$ defined in
\eq{mgvvv} satisfies the conditions as stated in
Proposition~\ref{bsdey}. In fact, from \eq{mgvvv}, we see that
$g(t,v,\bar{v},\tilde{v},Y(t^{-}))$ is ${{\cal F}_{t}}$-adapted for
any given $(v,\bar{v},\tilde{v})\in R\times R^{1\times d}\times
L^{2}_{\nu}(R^{h}_{+},R^{1\times h})$ with
$g(t,0,0,0,Y(t^{-}))\equiv 0\in L^{2}_{{\cal F}}([0,T],R,P)$,
Furthermore, for the sequence of nondecreasing stopping times
$\{\tau_{n},n=1,2,...\}$ as defined in \eq{stopsequence},
we have
\begin{eqnarray}
&&\left|\bar{Z}(t)\right|I_{\{t\leq\tau_{n}\}}
\leq\bar{K}_{n}e^{\sum_{i=1}^{h}\frac{2B_{\rho}}{\lambda_{i}}
\|L(\lambda t)\|}I_{\{t\leq\tau_{n}\}} \leq\tilde{K}_{n},
\nonumber
\end{eqnarray}
where $\bar{K}_{n}$ and $\tilde{K}_{n}$ are positive constants
depending on $n$. In addition, it follows from the proof of
\eq{fracpp} that
\begin{eqnarray}
&&\left(\int_{z_{i}>0}(F(t,z_{i}))^{2}\nu_{i}(dz_{i})\right)
I_{\{t\leq\tau_{n}\}}
\leq\bar{L}e^{\sum_{i=1}^{h}(6+\frac{4B_{\rho}}{\lambda_{i}})\|
L(\lambda t)\|}I_{\{t\leq\sigma_{n}\}} \leq\tilde{L}_{n},\nonumber
\end{eqnarray}
where $\bar{L}$ is some positive constant and $\tilde{L}_{n}$ is a
positive constant depending on $n$. Therefore, for any
$(u,\bar{u},\tilde{u})$, $(v,\bar{v},\tilde{v})\in R\times
R^{1\times d}\times L^{2}_{\nu}(R^{h}_{+},R^{1\times h})$, we have
\begin{eqnarray}
&&\left\|(g(t,u,\bar{u},\tilde{u},Y(t^{-}))-g(t,v,\bar{v},
\tilde{v},Y(t^{-})))I_{\{t\leq\tau_{n}\}}\right\|
\nonumber\\
&\leq&h\tilde{K}^{2}_{n} \tilde{L}_{n}\|u-v\|
+\left\|\bar{u}-\bar{v}\right\|\left(\frac{1}{2}(\rho(Y(t^{-}))+d)
\right)I_{\{t\leq\tau_{n}\}}+h\lambda_{i}\tilde{K}_{n}
\left(\tilde{L}_{n}\right)^{\frac{1}{2}}
\|\tilde{u}-\tilde{v}\|_{\nu}\nonumber\\
&\leq&K_{n}\left(\|u-v\|+\|\bar{u}-\bar{v}\|+\|\tilde{u}-\tilde{v}\|_{\nu}
\right),\nonumber
\end{eqnarray}
where $K_{n}$ is some positive constant depending on $n$ and in the
last inequality, we have used \eq{boundrho}. Thus, all conditions
stated in Proposition~\ref{bsdey} are satisfied, which implies that
\eq{xbsde} has a unique adapted solution.

Now, for each $t\in[0,T]$ and
$B^{K}(t)=\int_{0}^{t}\rho(Y(s^{-}))ds$, we define the density
process
\begin{eqnarray}
Z^{P^{*}}(t)\equiv\frac{O(t)}{O_{0}{\cal E}(B^{K})(t)}.
\elabel{zpequiv}
\end{eqnarray}
Then, the corresponding probability $P^{*}\sim P$. Thus, it is the
opportunity-neutral probability measure in the sense of Definition
3.16 in C\u{e}rn\'y and Kallsen~\cite{cerkal:strgen}. Furthermore,
by Corollary 8.7(b) and equation (8.19) in pages 135-138 of Jacod
and Shiryaev~\cite{jacshi:limthe}, we can rewrite $Z^{P^{*}}$ in
\eq{zpequiv} as
\begin{eqnarray}
Z^{P^{*}}(t)&=&{\cal E}(K)(t){\cal E}\left(-B^{K}\right)(t)={\cal
E}\left(M^{K}\right)(t) \elabel{rzpe}
\end{eqnarray}
for each $t\in[0,T]$, where $K$ is defined in \eq{kloo} and $M^{K}$
is defined in \eq{mkreps}. Then, by a similar method as used in the
proof of Proposition~\ref{equimar}(2),  we know that $Z^{P^{*}}$ is
a bounded positive martingale. Thus, for each pair of
$i,j\in\{1,...,d\}$ and $t\in[0,T]$, we have
\begin{eqnarray}
&&\langle D_{i},D_{j}\rangle^{P^{*}}(t)=[D_{i},D_{j}]^{P^{*}}(t)
=[D_{i},D_{j}](t)=\int_{0}^{t}\tilde{c}^{D^{*}}_{ij}(s)ds,
\elabel{ddcij}
\end{eqnarray}
where the first equality in \eq{ddcij} is owing to the continuity of
$D$, Theorem 5.52 in page 55 of Jacod and
Shiryaev~\cite{jacshi:limthe}, Theorem 4.47(c) in page 52 of Jacod
and Shiryaev~\cite{jacshi:limthe}, the equivalence between $P^{*}$
and $P$, and Girsanov-Meyer Theorem in page 132 of
Protter~\cite{pro:stoint}. The second equality follows from Theorem
4.47(a) in page 52 of Jacod and Shiryaev~\cite{jacshi:limthe} since
$Z^{P^{*}}$ is bounded and Girsanov-Meyer Theorem in page 132 of
Protter~\cite{pro:stoint}. Furthermore, $\tilde{c}^{D^{*}}_{ij}$ in
the last equality is defined in \eq{xicbI}.

Now, note that $D$ is continuous. Then, by Theorem 4.52 in page 55
of Jacod and Shiryaev~\cite{jacshi:limthe} (or the proof of
Corollary in page 83 of Protter~\cite{pro:stoint}), we know that
$[D_{i},V](t)$ and $[D_{i},V]^{c}(t)$ for each $i\in\{1,...,d\}$
under $P$ or $P^{*}$ have the same compensator. Hence, we have
\begin{eqnarray}
&&\langle D_{i},V\rangle^{P^{*}}(t)=\left(\langle
D_{i},V\rangle^{c}\right)^{P^{*}}(t)=\left([D_{i},V]^{c}
\right)^{P^{*}}(t)=[D_{i},V]^{c}(t)=\int_{0}^{t}
\tilde{c}^{DV^{*}}_{i}(s)ds \elabel{dvij},
\end{eqnarray}
where $\tilde{c}^{DV^{*}}_{i}$ is defined in \eq{xicbII}. The last
equality of \eq{dvij} follows from Theorem 4.47(a) in page 52 of
Jacod and Shiryaev~\cite{jacshi:limthe} and the fact that
\begin{eqnarray}
V(t)
&=&V(0)+\int_{0}^{t}g(s,V(s^{-}),\bar{V}(s),\tilde{V}(s,\cdot),Y(s^{-}))ds
\nonumber\\
&&+\int_{0}^{t}\sum_{i=1}^{d}\bar{V}_{i}(s)dW_{i}(s)
+\int_{0}^{t}\sum_{i=1}^{h}\int_{z_{i}>0}
\tilde{V}_{i}(s,z_{i})\tilde{N}_{i}(\lambda_{i}dz_{i},ds).\nonumber
\nonumber
\end{eqnarray}
Then, it follows from \eq{ddcij}-\eq{dvij}, Definition 4.6, and
equation (4.8) in C\u{e}rn\'y and Kallsen~\cite{cerkal:strgen} that
\eq{xicb} is true.

Finally, the unique existence of solution to \eq{geqn} is owing to
Theorem 6.8 in Jacod~\cite{jac:calsto} and the proofs of Lemma 4.9
and Theorem 4.10 in C\u{e}rn\'y and Kallsen~\cite{cerkal:strgen}.
Thus, by Theorem 4.10 in C\u{e}rn\'y and
Kallsen~\cite{cerkal:strgen}, we know that the mean-variance hedge
strategy is given by \eq{optimalh}. Hence, we complete the proof of
Theorem~\ref{opthedge}. $\Box$

\section{Conclusion}\label{concl}

In this paper, we prove the global risk optimality of the hedging
strategy explicitly constructed for an incomplete financial market.
Owing to the discussions in Pigorsch and
Stelzer~\cite{pigste:defsta} and references therein, our discussion
in this paper can be extended to the cases that the external risk
factors in \eq{sdeou} are correlated in certain manners. For the
simplicity of notation, we keep the presentation of the paper in the
current way. Furthermore, our study in this paper establishes the
connection between our financial system and existing general
semimartingale based study in C\u{e}rn\'y and
Kallsen~\cite{cerkal:strgen} since we can overcome the difficulties
in C\u{e}rn\'y and Kallsen~\cite{cerkal:strgen} by explicitly
constructing the process $N$ and the VOMM $Q^{*}$. In addition, our
objective and discussion in this paper are different from the recent
study of Jeanblanc {\em et al.}~\cite{jeaman:meavar} since the
authors in Jeanblanc {\em et al.}~\cite{jeaman:meavar} did not aim
to derive any concrete expression. Nevertheless, interested readers
may make an attempt to extend the study in Jeanblanc {\em et
al.}~\cite{jeaman:meavar} and apply it to our financial market model
to construct the corresponding explicit results. Finally, unlike the
studies in Hubalek {\em et al.}~\cite{hubkal:varopt} and Kallsen and
Vierthauer~\cite{kalvie:quahed}, our option $H$ is generally related
to a multivariate terminal function and hence a BSDE involved
approach is employed. Interested readers may take an attempt to
study whether the Laplace transform related method developed in
Hubalek {\em et al.}~\cite{hubkal:varopt} and Kallsen and
Vierthauer~\cite{kalvie:quahed} for single-variate terminal function
can be extended to our general multivariate case.

\end{document}